\documentclass[11pt]{article}

\evensidemargin0cm \oddsidemargin0cm \textwidth16cm
\textheight23cm \topmargin-2cm

\usepackage{amsmath}
\usepackage{amsthm}
\usepackage{amsfonts}
\usepackage{bbm}

\usepackage[usenames]{color}

\newcommand{\me}{\mathbb{E}}
\newcommand{\E}{\mathbb{E}}
\newcommand{\mr}{\mathbb{R}}
\newcommand{\R}{\mathbb{R}}
\newcommand{\eee}{{\rm e}}
\newcommand{\ind}{\mathbbm{1}}
\newcommand{\dd}{{\rm d}}

\newcommand{\mn}{\mathbb{N}}
\newcommand{\N}{\mathbb{N}}
\newcommand{\mmp}{\mathbb{P}}

\newcommand{\rr}{\rm r}
\DeclareMathOperator{\1}{\mathbbm{1}}

\newcommand{\ii}{{\rm{i}}}

\newtheorem{thm}{Theorem}[section]
\newtheorem{lemma}[thm]{Lemma}

\newtheorem{cor}[thm]{Corollary}

\newtheorem{assertion}[thm]{Proposition}
\theoremstyle{definition}

\theoremstyle{remark}
\newtheorem{rem}[thm]{Remark}

\begin{document}

\title{A functional limit theorem for nested Karlin's occupancy scheme generated by discrete Weibull-like distributions}\date{}
\author{Alexander Iksanov\footnote{Faculty of Computer Science and Cybernetics, Taras Shevchenko National University of Kyiv, Ukraine; e-mail address:
iksan@univ.kiev.ua} \ \ Zakhar Kabluchko\footnote{Institut f\"{u}r Mathematische Statistik, Westf\"{a}lische Wilhelms-Universit\"{a}t M\"{u}nster,
48149 M\"{u}nster, Germany; e-mail address: zakhar.kabluchko@uni-muenster.de} \ \ and \ \ Valeriya Kotelnikova\footnote{Faculty of Computer Science and Cybernetics, Taras Shevchenko National University of Kyiv, Ukraine; e-mail address:
valeria.kotelnikova@unicyb.kiev.ua}}

\maketitle
\begin{abstract}
\noindent Let $(p_k)_{k\in\mathbb{N}}$ be a discrete probability distribution for which the counting function $x\mapsto \#\{k\in\mathbb{N}: p_k\geq 1/x\}$ belongs to the de Haan class $\Pi$. Consider a deterministic weighted branching process generated by $(p_k)_{k\in\mathbb{N}}$. A nested Karlin's occupancy scheme is the sequence of Karlin balls-in-boxes schemes in which boxes of the $j$th level, $j=1,2,\ldots$ are identified with the $j$th generation individuals and the hitting
probabilities of boxes are identified with the corresponding weights. The collection of balls is the same for all generations, and each ball starts at the
root and moves along the tree of the deterministic weighted branching process according to the following rule: transition from a mother box to a daughter box occurs
with probability given by the ratio of the daughter and mother weights.

Assuming there are $n$ balls, denote by $\mathcal{K}_n(j)$ the number of occupied (ever hit) boxes in the $j$th level. For each $j\in\mathbb{N}$, we prove a functional limit theorem for the vector-valued process $(\mathcal{K}^{(1)}_{\lfloor e^{T+u}\rfloor},\ldots, \mathcal{K}^{(j)}_{\lfloor e^{T+u}\rfloor})_{u\in\mathbb{R}}$, properly normalized and centered, as $T\to\infty$. The limit is a vector-valued process whose components are independent stationary Gaussian processes. An integral representation of the limit process is obtained.
\end{abstract}

\noindent Key words: de Haan's class $\Pi$; functional limit theorem; infinite occupancy; nested hierarchy; random environment; stationary Gaussian process

\noindent 2020 Mathematics Subject Classification: Primary: 60F17 
\hphantom{2020 Mathematics Subject Classification: } Secondary: 60G15

\section{Introduction}

\subsection{Definition of the model}

Let $(p_k)_{k\in\mn}$ be a probability distribution, that is, $p_k\geq 0$ for all $k\in\mn$ and $\sum_{k\geq 1}p_k=1$. Additionally, we assume that $p_k>0$ for infinitely many $k$. Also, denote by $(\pi(t))_{t\geq 0}$ a Poisson process on $[0,\infty)$ of unit intensity. Let $S_1, S_2,\ldots$ denote its arrival times, that is,
\begin{equation}\label{eq:1}
\pi(t)=\#\{k\in\mn: S_k\leq t\},\quad t\geq 0.
\end{equation}
In the classical {\it Karlin occupancy scheme} balls are thrown independently into an infinite array of boxes $1,2,\ldots$ with probability $p_k$ of hitting box $k$. There are two standard versions of the Karlin scheme. In the first one that we shall call {\it deterministic} there are $n$ balls thrown, in the second that we shall call {\it Poissonized} the number of balls thrown is $\pi(t)$. The typical question arising in this setting is: what is the asymptotic behavior of various random sequences or functions defined by the scheme as $n\to\infty$ or $t\to\infty$. 

Denote by $\pi_k(t)$ the number of balls which fall into the box $k$ in the Poissonized scheme, so that $\pi(t)=\sum_{k\geq 1}\pi_k(t)$ for all $t\geq 0$. By the known thinning property of Poisson processes, the processes $(\pi_1(t))_{t\geq 0}$, $(\pi_2(t))_{t\geq 0},\ldots$ are independent, and $(\pi_k(t))_{t\geq 0}$ is a Poisson process of intensity $p_k$. Thus, the Poissonized scheme is more tractable than the deterministic scheme, for the numbers of balls falling into different boxes are independent in the former, whereas it is not the case in the latter. This explains a common approach used in most of the papers dealing with the deterministic scheme. First, the scheme is Poissonized. Second, the problem at hand is solved for the Poissonized scheme. Third, the Poissonized scheme is de-Poissonized, that is, transfer is made of the results obtained in the Poissonized scheme to the original deterministic scheme.

In this paper we are interested in a nested family of Karlin's occupancy schemes or simply {\it nested Karlin's occupancy scheme} generated by $(p_k)_{k\in\mn}$ which is defined as follows. Let $\mathcal{R}=\cup_{n\in\mn_0}\mn^n$ be the set of all possible individuals of some population, where $\mn_0:=\mn\cup\{0\}$. The ancestor is identified with
the empty word $\oslash$ and its weight is $p_\oslash=1$. An individual ${\rr}=r_1\ldots r_j$ of the $j$th generation whose weight is denoted by
$p_{\rr}$ produces an infinite number of offspring residing in the $(j+1)$th generation. The
offspring of the individual ${\rr}$ are enumerated by  ${\rr}i =r_1\ldots r_j i$, where $i\in \mn$, and the weights of the offspring are
denoted by $p_{\rr i}$. It is postulated that $p_{\rr i}:=p_{\rr}p_i$. Observe that, for each $j\in\mn$, $\sum_{|{\rr}|=j}p_{\rr}=1$, where, by convention, $|{\rr}|=j$ means that the sum is taken over all individuals of the $j$th generation. We identify individuals with boxes, so that the weights (probabilities) of boxes in the subsequent generations are formed by the vectors $(p_{\rr})_{|{\rr}|=1}=(p_k)_{k\in\mn}$, $(p_{\rr})_{|{\rr}|=2},\ldots$. At time $0$, infinitely many balls are collected in the box $\oslash$. For $n\in\mn$, at the time $n$ in the deterministic scheme or the time $S_n$ in the Poissonized scheme, a new ball arrives and falls, independently of the $(n-1)$ balls that have arrived earlier, into the box ${\rr}$ of the first generation with probability $p_{\rr}=p_r$, and {\it simultaneously} into the box ${\rr}i_1$ of the second generation with probability $p_{i_1}$, into the box ${\rr}i_1i_2$ of the third generation with probability $p_{i_2}$ and so on, indefinitely. A box is deemed {\it occupied} provided it was hit by a ball on its way over the generations. Observe that restricting attention to the $j$th generation we obtain the Karlin occupancy scheme with probabilities $(p_{\rr})_{|{\rr}|=j}$.

For $j\in\mn$, $n\in\mn$ and $t\geq 0$, denote by $\mathcal{K}_n^{(j)}$ and $K_t^{(j)}$ the number of occupied boxes in the $j$th generation when $n$ or $\pi(t)$ balls have been thrown, respectively. Assuming that the probabilities $p_k$ exhibit subexponential (Weibull-like) decay specified by condition \eqref{eq:6} we shall prove weak convergence of the infinite vectors $(\mathcal{K}_n^{(1)}, \mathcal{K}_n^{(2)},\ldots)$ and $(K_t^{(1)}, K_t^{(2)},\ldots)$, properly normalized and centered, as $n\to\infty$ and $t\to\infty$, respectively. 

\subsection{Main results}

As usual, we write $\overset{\mmp}{\to}$ to denote convergence in probability, and $\Rightarrow$, ${\overset{{\rm
d}}\longrightarrow}$ and ${\overset{{\rm f.d.d.}}\longrightarrow}$
to denote weak convergence in a function space, weak convergence
of one-dimensional and finite-dimensional distributions,
respectively. Also, we denote by $D:=D(\mr)$ the Skorokhod space of right-continuous functions defined on
$\mr$ with finite limits from the left.

Put $\rho(x):=\#\{k\in\mn: p_k\geq 1/x\}$ for $x>0$ and note that $\rho(x)=0$ for $x\in (0, 1]$ (unless $p_k=1$ for some $k\in\mn$). For each $T\geq 0$ and $j\in\mn$, put $${\bf K}^{(j)}(T,u):=\frac{K^{(j)}_{\eee^{T+u}}-\me K^{(j)}_{\eee^{T+u}}}{({\rm Var}\,K^{(j)}_{\eee^T})^{1/2}},\quad u\in\mr$$ and
$${\bf \mathcal{K}}^{(j)}(T,u):=\frac{\mathcal{K}^{(j)}_{\lfloor \eee^{T+u}\rfloor }-\me \mathcal{K}^{(j)}_{\lfloor \eee^{T+u}\rfloor}}{({\rm Var}\,\mathcal{K}^{(j)}_{\lfloor \eee^T\rfloor })^{1/2}},\quad u\in\mr.$$ Note that ${\bf K}^{(j)}(T):=({\bf K}^{(j)}(T,u))_{u\in\mr}$ and ${\bf \mathcal{K}}^{(j)}(T):=({\bf \mathcal{K}}^{(j)}(T,u))_{u\in\mr}$ are random elements with values in $D$. Here are our main results, Theorem \ref{main} for the Poissonized scheme and Corollary \ref{main1} for the deterministic scheme.
\begin{thm}\label{main}
Assume that, for all $\lambda>0$,
\begin{equation}\label{eq:6}
\lim_{t\to\infty}\frac{\rho(\lambda t)-\rho(t)}{(\log t)^\beta\ell(\log t)}=\log\lambda
\end{equation}
for some $\beta\geq 0$ and some $\ell$ slowly varying at $\infty$. If $\beta=0$ we further assume that $\ell$ is eventually nondecreasing and unbounded. Then
\begin{equation}\label{relation_main5000}
\big({\bf K}^{(j)}(T)\big)_{j\geq 1}~\Rightarrow~(Z_j)_{j\geq 1},\quad T\to\infty
\end{equation}
in the product $J_1$-topology on $D^\mn$. Here, $Z_1$, $Z_2,\ldots$ are independent copies of a centered stationary Gaussian process $Z:=(Z(u))_{u\in\mr}$ with covariance $$\me Z(u)Z(v)=\frac{\log \big(1+\eee^{-|u-v|}\big)}{\log 2},\quad u,v\in\mr.$$
\end{thm}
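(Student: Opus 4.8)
The plan is to prove the Poissonized statement first, since the thinning property makes the $K^{(j)}_t$ more tractable, and the key structural fact is that the nested scheme decomposes into a sum of contributions indexed by generations. My strategy has three phases: a single-coordinate functional limit theorem, a multivariate extension establishing asymptotic independence across generations, and the transfer to the product $J_1$-topology on $D^{\mn}$.

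For a single generation $j$, I would begin by writing $K^{(j)}_t$ as a sum over boxes ${\rr}$ with $|{\rr}|=j$ of indicators that box ${\rr}$ is ever hit. In the Poissonized scheme the number of balls reaching box ${\rr}$ is Poisson with parameter $p_{\rr}t$, so $\mathbb{P}(\text{box }{\rr}\text{ occupied})=1-\eee^{-p_{\rr}t}$, and these events are independent across disjoint boxes by Poisson thinning. This reduces the problem to the asymptotics of a sum of independent (but non-identically distributed) Bernoulli indicators, whose mean and variance I can compute via a Poissonization integral against the measure induced by $(p_{\rr})_{|{\rr}|=j}$. The hypothesis that $\rho$ lies in de Haan's class $\Pi$ with the normalization \eqref{eq:6} controls precisely the logarithmic growth of $\operatorname{Var} K^{(j)}_{\eee^T}$ and, crucially, the covariance structure: the quantity $\me K^{(j)}_{\eee^{T+u}}$ shifted by $u$ interacts with the $\Pi$-variation to produce, after normalization, the covariance kernel $\log(1+\eee^{-|u-v|})/\log 2$. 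I expect the computation of finite-dimensional covariances to reduce, after a change of variables $t=\eee^{T+u}$, to integrals of the form $\int_0^\infty (1-\eee^{-\eee^{u}x})(1-\eee^{-\eee^v x})\,\dd\rho^{\ast}(x)$ against the appropriate level-$j$ analogue of $\rho$, and the $\Pi$-condition will let me extract the claimed limit by a de Haan-type Tauberian argument. Tightness in $D(\mr)$ follows from a moment estimate on increments together with the fact that the centered indicator sums are sums of independent bounded contributions, for which standard criteria (e.g.\ a fourth-moment or chaining bound) apply.

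The heart of the multivariate statement is the asymptotic independence of $Z_1,Z_2,\ldots$. This is where I expect the main obstacle to lie, and it is somewhat counterintuitive because $K^{(j)}_t$ and $K^{(j')}_t$ for $j\neq j'$ are built from the \emph{same} balls traveling down the \emph{same} tree, hence are far from independent for fixed $t$. The resolution should be that occupancy in generation $j'>j$ is governed by much finer weights $p_{\rr}$ (products of $j'$ factors), so the boxes that are ``pivotal'' for the fluctuations of $K^{(j')}$ live at a different scale and are asymptotically uncorrelated with those pivotal for $K^{(j)}$. Concretely, I would compute the cross-covariance $\operatorname{Cov}(K^{(j)}_{\eee^{T+u}},K^{(j')}_{\eee^{T+v}})$ and show that, once divided by the product of standard deviations $(\operatorname{Var} K^{(j)}_{\eee^T})^{1/2}(\operatorname{Var} K^{(j')}_{\eee^T})^{1/2}$, it tends to zero as $T\to\infty$. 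The different powers of $\log$ appearing in the variances at levels $j$ and $j'$ (a level-dependent power of the logarithm, reflecting the $j$-fold convolution structure of the weights) should force this ratio to vanish. Joint Gaussianity then follows from a Cramér--Wold / characteristic-function argument applied to the independent-summand representation, and vanishing cross-covariance upgrades to genuine independence of the Gaussian limits.

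Finally, I would assemble the one-dimensional and finite-dimensional convergence with tightness to obtain functional convergence of each coordinate in the $J_1$-topology on $D(\mr)$, and then promote this to convergence of the full vector $({\bf K}^{(j)}(T))_{j\geq 1}$ in the product topology on $D^{\mn}$ using the asymptotic independence established above together with the standard fact that convergence in a countable product space reduces to joint convergence of every finite subcollection. The de-Poissonization step yielding Corollary \ref{main1} for $\mathcal{K}^{(j)}$ would then proceed by the usual comparison between $K^{(j)}_{\pi(t)}$ at $t=n$ and $\mathcal{K}^{(j)}_n$, controlling the discrepancy through concentration of $\pi(n)$ around $n$ and a Lipschitz-type estimate on the occupancy counts; I anticipate this transfer to be routine relative to the covariance analysis, the genuine difficulty being the cross-generation decorrelation in the second phase.
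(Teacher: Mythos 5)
Your outline follows the paper's route quite closely: Poissonization, the decomposition of $K^{(j)}_t$ into a sum of independent Bernoulli indicators over the boxes of generation $j$, a Lindeberg-type CLT for linear combinations (which here is essentially automatic since the summands are bounded by $1$ and the variance diverges), moment bounds on increments for tightness, the covariance identity ${\rm Cov}(K^{(j)}_s,K^{(j)}_t)=\me K^{(j)}_{s+t}-\me K^{(j)}_{s\vee t}$ extracted by a de Haan/Tauberian argument, and cross-generation decorrelation forced by the different powers of the logarithm in the variances. However, there is one substantive step that your plan takes for granted and that is in fact the technical heart of the argument: you must \emph{prove} that the generation-$j$ counting function $\rho_j(x)=\#\{\rr:|\rr|=j,\ p_\rr\geq 1/x\}$ belongs to de Haan's class $\Pi$, with auxiliary function asymptotic to $\frac{(\Gamma(\beta+1))^j}{\Gamma(j(\beta+1))}(\log t)^{j\beta+j-1}(\ell(\log t))^j$. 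Hypothesis \eqref{eq:6} only concerns $j=1$; for $j\geq 2$ nothing is assumed, and without this fact your ``level-$j$ analogue of $\rho$'' has no known asymptotics, so neither the variance growth, nor the covariance kernel, nor the decorrelation ratio can be extracted at higher generations. The paper proves this by induction on $j$, writing $V_{k+1}(t)=\int_{[0,t]}V(t-x)\,\dd V_k(x)$ with $V_j(t)=\rho_j(\eee^t)$ and controlling the convolution via Laplace transforms and Karamata's Tauberian theorem; it is precisely here that the \emph{explicit} form $(\log t)^\beta\ell(\log t)$ of the auxiliary function in \eqref{eq:6} (rather than mere membership in $\Pi$) is indispensable, as is the extra assumption that $\ell$ is unbounded when $\beta=0$. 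Your proposal should identify this as a lemma to be proved, not as an input.

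A second, smaller gap concerns the joint Gaussianity across generations. Vanishing cross-covariances plus marginal Gaussian limits do not by themselves give independence; you correctly invoke Cram\'er--Wold, but the linear combination $\sum_j\alpha_j{\bf K}^{(j)}(T,u_j)$ is \emph{not} a sum of independent terms if you index by boxes, because a box and its descendants in later generations are hit by the same balls and hence are dependent. The fix (used in the paper) is to regroup the entire linear combination by first-generation ancestor $\rr_1$: the aggregated contributions of distinct first-generation subtrees are independent, and one then verifies the Lindeberg condition for these aggregated summands via a fourth-moment computation, using $\me K^{(j-1)}_{\eee^T}=o\big({\rm Var}\,K^{(j)}_{\eee^T}\big)$ (this is where the unboundedness of $\ell$ is again needed when $\beta=0$). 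With these two points supplied, your plan matches the paper's proof.
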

\begin{cor}\label{main1}
Under the assumptions of Theorem \ref{main},
\begin{equation}\label{relation_main50001}
\big({\bf \mathcal{K}}^{(j)}(T)\big)_{j\geq 1}~\Rightarrow~(Z_j)_{j\geq 1},\quad T\to\infty
\end{equation}
in the product $J_1$-topology on $D^\mn$.
\end{cor}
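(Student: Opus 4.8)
The plan is to de-Poissonize Theorem~\ref{main} by realizing both schemes on one probability space and reducing $({\bf \mathcal{K}}^{(j)}(T))_{j\ge1}$ to a random time change of $({\bf K}^{(j)}(T))_{j\ge1}$ plus locally uniformly negligible corrections. The starting point is the pathwise identity $\mathcal K^{(j)}_n=K^{(j)}_{S_n}$, valid for all $j$ simultaneously, since by time $S_n$ the Poissonized scheme has received exactly $n$ balls. Writing $n=\lfloor \eee^{T+u}\rfloor$ and
$$\phi_T(u):=\log S_{\lfloor \eee^{T+u}\rfloor}-T,\qquad u\in\mr,$$
we obtain $\mathcal K^{(j)}_{\lfloor \eee^{T+u}\rfloor}=K^{(j)}_{\eee^{T+\phi_T(u)}}$. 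Each $\phi_T$ is nondecreasing and right-continuous, hence an element of $D$, and $\phi_T(u)-u=\log\big(S_{\lfloor \eee^{T+u}\rfloor}/\eee^{T+u}\big)\to0$ almost surely, uniformly for $u$ in compact sets, by the strong law of large numbers for $(S_n)$. Thus $\phi_T\to{\rm id}$ locally uniformly a.s.

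Next, with $\sigma^{(j)}_T:=({\rm Var}\,K^{(j)}_{\eee^T})^{1/2}$ and $\tilde\sigma^{(j)}_T:=({\rm Var}\,\mathcal K^{(j)}_{\lfloor \eee^T\rfloor})^{1/2}$, the identity above yields
$${\bf \mathcal{K}}^{(j)}(T,u)=\frac{\sigma^{(j)}_T}{\tilde\sigma^{(j)}_T}\,{\bf K}^{(j)}\big(T,\phi_T(u)\big)+\frac{\me K^{(j)}_{\eee^{T+\phi_T(u)}}-\me \mathcal K^{(j)}_{\lfloor \eee^{T+u}\rfloor}}{\tilde\sigma^{(j)}_T},$$
the first summand being the composition ${\bf K}^{(j)}(T)\circ\phi_T$ up to the scalar $\sigma^{(j)}_T/\tilde\sigma^{(j)}_T$. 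Because the limiting covariance $\me Z(u)Z(v)=\log(1+\eee^{-|u-v|})/\log2$ is continuous with $\me|Z(u)-Z(v)|^2=O(|u-v|)$ as $u\to v$, each $Z_j$ admits continuous paths. Theorem~\ref{main} gives $({\bf K}^{(j)}(T))_{j\ge1}\Rightarrow(Z_j)_{j\ge1}$, and since the single map $\phi_T\to{\rm id}$ locally uniformly, one has joint convergence $\big(({\bf K}^{(j)}(T))_{j\ge1},\phi_T\big)\Rightarrow\big((Z_j)_{j\ge1},{\rm id}\big)$; continuity of the composition map $((x_j)_j,\phi)\mapsto(x_j\circ\phi)_j$ at points with continuous $x_j$ and $\phi={\rm id}$ then yields $({\bf K}^{(j)}(T)\circ\phi_T)_{j\ge1}\Rightarrow(Z_j)_{j\ge1}$ in the product $J_1$-topology.

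It remains to show that the scalars and the second summand are asymptotically harmless. First, ${\rm Var}\,\mathcal K^{(j)}_{\lfloor \eee^T\rfloor}\sim{\rm Var}\,K^{(j)}_{\eee^T}$, so $\sigma^{(j)}_T/\tilde\sigma^{(j)}_T\to1$; this follows from the mean and variance asymptotics under \eqref{eq:6} obtained in the course of proving Theorem~\ref{main}, together with the concentration $\pi(t)\approx t$. Second, splitting
$$\me K^{(j)}_{\eee^{T+\phi_T(u)}}-\me \mathcal K^{(j)}_{\lfloor \eee^{T+u}\rfloor}=\big(\me K^{(j)}_{S_n}-\me K^{(j)}_{n}\big)+\big(\me K^{(j)}_{n}-\me\mathcal K^{(j)}_{n}\big),\quad n=\lfloor \eee^{T+u}\rfloor,$$
I bound the first bracket via the mean value theorem by $\big|\tfrac{\dd}{\dd s}\me K^{(j)}_{\eee^s}\big|\cdot|\log S_n-\log n|$; since $|\log S_n-\log n|=O(\sqrt{\log\log n/n})$ a.s. while $\tfrac{\dd}{\dd s}\me K^{(j)}_{\eee^s}$ grows only polynomially in $s$, this is $o(\sigma^{(j)}_T)$ uniformly in $u$ on compacts. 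The second bracket equals $\sum_{|\rr|=j}\big((1-p_\rr)^n-\eee^{-np_\rr}\big)$, whose absolute value is at most $C\sum_{|\rr|=j}np_\rr^2\eee^{-np_\rr}$, again $o(\sigma^{(j)}_T)$ by \eqref{eq:6}. Hence $\sup_{u\in[a,b]}|{\bf \mathcal{K}}^{(j)}(T,u)-{\bf K}^{(j)}(T,\phi_T(u))|\overset{\mmp}{\to}0$ for every compact $[a,b]$ and every $j$, and a converging-together argument (local uniform convergence implying $J_1$-convergence) upgrades the convergence of $({\bf K}^{(j)}(T)\circ\phi_T)_{j\ge1}$ to $({\bf \mathcal{K}}^{(j)}(T))_{j\ge1}\Rightarrow(Z_j)_{j\ge1}$.

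The main obstacle is the second step: verifying that composition with the random time change $\phi_T$ is continuous in the limit, which hinges on the continuity of the paths of the limit process and on upgrading the a.s. locally uniform convergence $\phi_T\to{\rm id}$ to joint weak convergence with $({\bf K}^{(j)}(T))_{j\ge1}$. The genuinely technical input is the pair of increment estimates above; their smallness relative to $\sigma^{(j)}_T$ must be checked uniformly in $u$ on compacts and for each generation $j$, using the growth rates of $\me K^{(j)}_t$ and ${\rm Var}\,K^{(j)}_t$ dictated by \eqref{eq:6}. The joint handling of all $j$ is eased by the common time change $\phi_T$, so that only coordinatewise negligibility, combined with the already-joint convergence in Theorem~\ref{main}, is needed.
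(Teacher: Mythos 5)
Your proposal is correct and follows essentially the same route as the paper: the identity $\mathcal{K}^{(j)}_n=K^{(j)}_{S_n}$, the uniform strong law for the random time change, continuity of composition at continuous limit paths, asymptotic equivalence of the Poissonized and de-Poissonized variances, and the same two-bracket split of the centering discrepancy. The only minor deviation is that you control $\Phi_j(S_n)-\Phi_j(n)$ via the law of the iterated logarithm and the mean value theorem applied to $s\mapsto\Phi_j(\eee^s)$, whereas the paper uses Donsker's theorem together with a monotonicity estimate on the increments of $\Phi_j$; both rest on the derivative asymptotics \eqref{eq:33331} and yield the required $o\big(({\rm Var}\,K^{(j)}_{\eee^T})^{1/2}\big)$ bound.
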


\begin{rem}
A typical example in which condition \eqref{eq:6} holds is given by Weibull-like probabilities $p_k=C\exp(-k^\alpha)$, $k\in\mn$, where $\alpha\in (0,1)$. In this case, $\rho(x)=\lfloor (\log (Cx))^{1/\alpha}\rfloor$, where $\lfloor a\rfloor$ denotes the integer part of real $a$, so that condition \eqref{eq:6} holds with $\beta=\alpha^{-1}-1$ and $\ell(t)=\alpha^{-1}$ for $t>0$. A counterpart of relation \eqref{eq:6} expressed in terms of $p_k$ can be found in Proposition \ref{prop:suff}.
\end{rem}
\begin{rem}
A function $h:(0,\infty)\to\mr$ is said to belong to de Haan's class $\Pi$ with the auxiliary function $g$ if, for all $\lambda>0$,
\begin{equation}\label{eq:dehaan}
\lim_{t\to\infty}\frac{h(\lambda t)-h(t)}{g(t)}=\log \lambda,
\end{equation}
and $g$ is slowly varying at $\infty$. See Section 3 in \cite{Bingham+Goldie+Teugels:1989} for detailed information about class $\Pi$.

Not only does condition \eqref{eq:6} tell us that $\rho$ belongs to the class $\Pi$, but also puts restrictions on the auxiliary function. The explicit form of the auxiliary function is essentially used in the proof of Proposition \ref{prop:regular}, when showing that the counting functions of probabilities in generations $2$, $3,\ldots$ belong to the class $\Pi$, as well. Apart from this, the extra information provided by the explicit form is not needed. For instance, weak convergence of the first coordinate in \eqref{relation_main5000} could have been proved under the sole assumption \eqref{eq:dehaan} with $\rho$ replacing $h$.
\end{rem}

In Theorem \ref{thm:main2} we provide an integral representation of the limit process $Z$ and prove that there is a version of $Z$ with continuous sample paths.
\begin{thm}\label{thm:main2}
Let $W(\dd x, \dd y)$ be a white noise on the horizontal strip $\R\times [0,\,1]$ whose intensity measure is the standard Lebesgue measure. Put
$$Z(u) := \frac{1}{(\log 2)^{1/2}} \int_{\R\times [0,\,1]} \left( \ind_{\left\{y \leq \exp(-\eee^{-(x-u)})\right\}} -\exp(-\eee^{-(x-u)})\right)W(\dd x, \dd y),\qquad u\in \R.$$ Then $Z:=(Z(u))_{u\in \R}$ is a stationary centered Gaussian process whose covariance function is given by
$${\rm Cov}\,(Z(u), Z(v)) = \frac{\log (1 + \eee^{- |u-v|})}{\log 2},
\qquad u,v\in \R.$$ Furthermore, the process $Z$ has a version with sample paths which are Hölder continuous with exponent $\alpha$, for every $\alpha\in (0,1/2)$.
\end{thm}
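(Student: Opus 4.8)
The plan is to treat $Z(u)$ as a Wiener integral of the deterministic kernel
$$f_u(x,y) := \ind_{\{y \leq \exp(-\eee^{-(x-u)})\}} - \exp(-\eee^{-(x-u)}), \qquad (x,y)\in\R\times[0,1],$$
against the white noise $W$. First I would check that $f_u \in L^2(\R\times[0,1])$ so that the integral is well defined. Writing $g_u(x):=\exp(-\eee^{-(x-u)})\in(0,1)$ and integrating out $y$ first, one finds $\int_0^1 f_u(x,y)^2\,\dd y = g_u(x)(1-g_u(x))$; since $1-g_u(x)$ decays exponentially as $x\to+\infty$ while $g_u(x)$ decays doubly exponentially as $x\to-\infty$, the map $x\mapsto g_u(x)(1-g_u(x))$ is integrable and $f_u\in L^2$. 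Because any linear combination $\sum_i a_i Z(u_i)=\int (\sum_i a_i f_{u_i})\,\dd W$ is again a Wiener integral, the family $(Z(u))_{u\in\R}$ is a centered Gaussian process. Observing that $f_u(x,y)=F(x-u,y)$ for the fixed kernel $F(x,y):=\ind_{\{y\leq \exp(-\eee^{-x})\}}-\exp(-\eee^{-x})$, the translation invariance of Lebesgue measure (hence of the law of $W$ under shifts in the $x$-variable) yields stationarity; alternatively, stationarity is automatic once the covariance is shown to depend on $u-v$ alone.

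For the covariance I would invoke the isometry property of the Wiener integral,
$$\operatorname{Cov}(Z(u),Z(v)) = \frac{1}{\log 2}\int_{\R\times[0,1]} f_u(x,y)f_v(x,y)\,\dd x\,\dd y.$$
Integrating out $y$ gives $\int_0^1 f_u f_v\,\dd y = \min(g_u(x),g_v(x)) - g_u(x)g_v(x)$, the covariance of two nested indicators under the uniform law on $[0,1]$. Since $u\mapsto g_u(x)$ is strictly decreasing, for $u\geq v$ one has $g_u(x)\leq g_v(x)$, so the integrand equals $g_u(x)(1-g_v(x))$. Substituting $t=\eee^{-(x-v)}$ and setting $c:=\eee^{u-v}\geq1$ reduces the $x$-integral to the Frullani integral
$$\int_0^\infty \frac{\eee^{-ct}-\eee^{-(c+1)t}}{t}\,\dd t = \log\frac{c+1}{c} = \log\big(1+\eee^{-(u-v)}\big).$$
By symmetry this gives $\operatorname{Cov}(Z(u),Z(v))=\log(1+\eee^{-|u-v|})/\log 2$, and in particular $\operatorname{Var} Z(u)=1$. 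Recognizing and justifying this Frullani reduction, together with resolving the minimum through the monotonicity of $u\mapsto g_u(x)$, is the computational crux of the argument.

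Finally, for path regularity I would apply the Kolmogorov--Chentsov criterion. From the covariance,
$$\E\big[(Z(u)-Z(v))^2\big] = \frac{2}{\log 2}\big(\log 2 - \log(1+\eee^{-|u-v|})\big),$$
and since $r\mapsto \log 2 - \log(1+\eee^{-r})$ vanishes at $r=0$ with derivative $(\eee^r+1)^{-1}\leq 1/2$ for $r\geq0$, this quantity is bounded by $|u-v|/\log 2$ for all $u,v\in\R$. As $Z(u)-Z(v)$ is centered Gaussian, for every $m\in\mn$ we obtain $\E[|Z(u)-Z(v)|^{2m}] = (2m-1)!!\,\big(\E[(Z(u)-Z(v))^2]\big)^m \leq C_m\,|u-v|^m$. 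The Kolmogorov--Chentsov theorem then furnishes a version whose sample paths are, on each compact interval, Hölder continuous of every exponent $\alpha<(m-1)/(2m)$; letting $m\to\infty$ covers all $\alpha\in(0,1/2)$, which is precisely the asserted regularity. I do not expect serious obstacles in this last step, the local linear growth of the incremental variance being exactly the Brownian-type behaviour that caps the attainable Hölder exponent at $1/2$.
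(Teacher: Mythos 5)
Your proposal is correct and follows essentially the same route as the paper: both compute the covariance via the white-noise isometry, integrate out $y$ to get $\min(a,b)-ab$, resolve the minimum by monotonicity of $u\mapsto\exp(-\eee^{-(x-u)})$, reduce the $x$-integral to a Frullani integral, and obtain H\"older continuity from the Kolmogorov--Chentsov theorem after noting that the incremental variance grows linearly in $|u-v|$ near $0$. You merely spell out a few details the paper leaves implicit (the $L^2$ check and the Gaussian moment bounds needed to push the exponent up to every $\alpha<1/2$).
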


\section{Relevant literature}

For $n\in\mn$ and $i\leq n$, denote by $\mathcal{K}_n(i)$, $\mathcal{K}_n:=\mathcal{K}^{(1)}_n$ and $\mathcal{K}^{{\rm odd}}_n$ the number of boxes containing at least $i$ balls, the number of occupied boxes and the number of boxes containing an odd number of balls, respectively, when $n$ balls have been thrown. We start by giving a brief review of articles dealing with one-dimensional and functional central limit theorems for the just introduced (and related) quantities arising in Karlin's occupancy scheme. Some other aspects of the model are discussed in the survey \cite{Gnedin+Hansen+Pitman:2007}.

As far as we know, there are only few articles in which functional limit theorems for Karlin's occupancy scheme were proved. We think it is quite a surprising fact in view of almost 50 years long history of the model. Under the assumption
\begin{equation}\label{eq:regular}
\rho(x)~\sim x^\alpha \ell(x),\quad x\to\infty
\end{equation}
for some $\alpha\in (0,1)$ and some $\ell$ slowly varying at $\infty$, in \cite{Durieu+Wang:2016} functional central limit theorems for the processes $(\mathcal{K}_{\lfloor nt\rfloor})_{t\in [0,\,1]}$ and $(\mathcal{K}^{{\rm odd}}_{\lfloor nt\rfloor})_{t\in [0,\,1]}$, properly normalized and centered, as $n\to\infty$ were obtained. The weak limit for each process is a fractional Brownian motion. These results complement the one-dimensional central limit theorems for $\mathcal{K}_n$ and $\mathcal{K}_n^{{\rm odd}}$ due to Karlin (Theorems 4 and 6 in \cite{Karlin:1967}). Note that an ultimate version of the one-dimensional central limit theorem for $\mathcal{K}_n$ was given in \cite{Dutko:1989} under the sole assumption that $\lim_{n\to\infty}{\rm Var}\,\mathcal{K}_n=\infty$. Note that the last limit relation may hold even if condition \eqref{eq:regular} fails to hold. Theorem 2.1 in \cite{Hwang+Janson:2008} provides a local central limit theorem for $\mathcal{K}_n$. In \cite{Durieu+Wang:2016}, see also \cite{Durieu+Samorodnitsky+Wang:2020}, functional central limit theorems were proved for certain randomized versions of $(\mathcal{K}_{\lfloor nt\rfloor})_{t\in [0,1]}$ and $(\mathcal{K}^{{\rm odd}}_{\lfloor nt\rfloor})_{t\in [0,1]}$. In \cite{Chebunin+Kovalevskii:2016}, under \eqref{eq:regular}, a functional limit theorem for the process $(\mathcal{K}_{\lfloor nt\rfloor}(1),\ldots, \mathcal{K}_{\lfloor nt\rfloor}(i))_{t\in [0,\,1]}$ ($i\in\mn$), properly normalized and centered, was obtained. The limit is an $i$-dimensional self-similar Gaussian process. A recent extension of this theorem which particularly covers the case $\alpha=1$ in \eqref{eq:regular} can be found in \cite{Chebunin+Zuyev:2021}.

When the condition
\begin{equation}\label{eq:regular0}
\rho(x)~\sim~ \ell(x),\quad x\to\infty
\end{equation}
holds which is the situation we are focussed at, much less was known. In particular, to the best of our knowledge, under \eqref{eq:regular0}, functional limit theorems for $\mathcal{K}_n$ or $K_t$, properly scaled, centered and normalized, have not been proved so far. Under the assumption that the function $\sum_{k\geq 1}p_k\1_{\{p_k\leq 1/x\}}$ is regularly varying at $\infty$ of index $-1$ which entails \eqref{eq:regular0} it was proved on p.~380 in \cite{Barbour+Gnedin:2009} that the Poissonized version of $(\mathcal{K}_n(1)-\mathcal{K}_n(2), \ldots, \mathcal{K}_n(i)-\mathcal{K}_n(i-1))$ ($i\in\mn$), properly normalized and centered, converges in distribution to an $i$-dimensional Gaussian vector. On the other hand, there is a huge literature addressing various aspects of the Karlin occupancy scheme in the rather particular geometric case $p_k=p(1-p)^{k-1}$, $k\in\mn$. This interest is partly motivated by a connection to the {\it leader election problem} and mathematical tractability which results from possibility of explicit (but tedious) calculations. We refrain from giving a survey, a selection of relevant articles can be traced via the references given in Section 1.2 of \cite{Bogachev+Gnedin+Yakubovich:2008}. We only mention that the Poissonized version of the number of occupied boxes centered by its mean (without normalization) only converges in distribution along subsequences. This explains the fact that condition \eqref{eq:6} is not satisfied by the geometric distribution. We note in passing that no normalization and centering for $\mathcal{K}_n^{(j)}$ exists which would ensure convergence in distribution. This implies that condition \eqref{eq:5} given below can hold for no $j$, the fact that can alternatively be checked by a direct calculation.

There is a version of the nested Karlin's occupancy scheme, called {\it nested occupancy scheme in random environment}, in which the distribution $(p_k)_{k\in\mn}$ is random. Such a model was introduced in \cite{Bertoin:2008} and further investigated in \cite{Buraczewski+Dovgay+Iksanov:2020}, \cite{Businger:2017}, \cite{Gnedin+Iksanov:2020}, \cite{Iksanov+Marynych+Samoilenko:2020}, \cite{Joseph:2011}. In \cite{Bertoin:2008} and \cite{Joseph:2011} the asymptotics of the number of occupied boxes $K_n^{(j)}$ and related quantities was analyzed at the generations $j$ of order $\log n$. Some results of the last two cited papers apply to the nested Karlin occupancy scheme. We are not aware of any articles which would treat the generations $j$ with $j=j_n\to\infty$ and  $j_n=o(\log n)$ as $n\to\infty$ of the nested Karlin occupancy scheme.

\section{Informal derivation of the limit process}

We start with some preparations. Let $j\in\mn$. First we give a representation for $K_t^{(j)}$ to be used for several times. Denote by $T_{\rr}$ the time at which the box $\rr$ with $|{\rr}|=j$ is filled for the first time. Observe that $T_{\rr}$ has an exponential distribution with parameter $p_{\rr}$, that is, $\mmp\{T_{\rr}>t\}=\eee^{-p_{\rr} t}$ for $t\geq 0$, and that the collection $(T_{\rr})_{|{\rr}|=j}$ consists of independent random variables. For the box ${\rr}$ with $|{\rr}|=j$, put $$G_{\rr} := -\log p_{\rr} - \log T_{\rr}.$$
Then the collection $(G_{\rr})_{|{\rr}|=j}$ consists of independent random variables with the standard Gumbel distribution, that is,
$$
\mmp\{G_{\rr} \leq x\} = \exp(-\eee^{-x}), \quad x\in \R.
$$
Assume that the number of balls thrown is $\pi(t)$. Then the number of occupied boxes is given by
\begin{equation}\label{eq:jthgener}
K_t^{(j)} = \sum_{|{\rr}|=j}\1_{\{T_{\rr}\leq t\}} = \sum_{|{\rr}|=j}\1_{\{-\log p_{\rr} - G_{\rr} \leq \log t\}}.
\end{equation}

In the remainder of this section we provide an informal derivation of the limit process $Z$ in Theorem \ref{main}. Also, we prove Theorem \ref{thm:main2} and identify the spectral density of $Z$ in Proposition \ref{prop:spectral}. For simplicity of presentation we only consider the first generation. When $j=1$, we can work with the collection $(p_k, T_k, G_k)_{k\in\mn}$ instead of $(p_{\rr}, T_{\rr}, G_{\rr})_{|{\rr}|=1}$  in which case \eqref{eq:jthgener} reads
\begin{equation}\label{eq:firstgen}
K_t:=K_t^{(1)} = \sum_{k\geq 1} \ind_{\{T_k\leq t\}} = \sum_{k\geq 1}\ind_{\{-\log p_k - G_k \leq \log t\}}.
\end{equation}
Essentially, $K_t$ is an empirical process generated by the deterministic points $-\log p_k$, $k\in \N$ with independent random Gumbel shifts. In the following, we shall approximate $K_t$, properly normalized and centered, by a combination of Brownian bridges.
For $t= \eee^{T+u}$ the last centered formula reads
\begin{equation}\label{eq:K_aux}
K_{\eee^{T+u}}
= \sum_{k\geq 1} \ind_{\{-\log p_k - G_k \leq T+u\}}
=\sum_{k\geq 1} \ind_{\{G_k \geq -\log p_k - T - u\}}
=\sum_{k\geq 1}\left(1 - \ind_{\{G_k<-\log p_k - T - u\}}\right).
\end{equation}
Put $f(t):=t^\beta\ell(t)$ for large $t$, with the same $\beta$ and $\ell$ as in \eqref{eq:6}. According to formula \eqref{eq:414}, ${\rm Var}\,K_{\eee^T}\sim (\log 2) f(T)$ as $T\to\infty$. Hence, we can work with the process ${\bf K}^\ast(T,u)$ defined by 
$${\bf K}^\ast(T,u):=\frac{K_{\eee^{T+u}} - \E K_{\eee^{T+u}}}{\sqrt {f(T)}}=- \sum_{k\geq 1} \frac{\ind_{\{G_k<-\log p_k - T - u\}} - \mmp\{G_k<-\log p_k - T - u\}}{\sqrt{f(T)}}$$ rather than ${\bf K}^{(1)}(T,u)$. We now argue (in an informal way) that, for any $A>0$, the process $({\bf K}^\ast(T,u))_{u\in [-A,\,A]}$ converges weakly as $T\to \infty$ and identify the limit. To this end, take an infinitesimal interval $[x, x+\dd x]$ and consider all those $k$ for which $-\log p_k - T \in [x, x + \dd x]$. The number of such points is
$$\rho(\eee^{T + x + \dd x}) - \rho (\eee^{T + x})~\sim~f(T+x)\dd x~ \sim~ f(T)\dd x,\quad T\to\infty$$ in view of \eqref{eq:6}. The contribution of such $k$'s to ${\bf K}^\ast(T,u)$ is
$$-\sum_{\substack{k\geq 1\\ -\log p_k - T \in [x,\, x + \dd x]}} \frac{\ind_{\{G_k< x - u \}} - \mmp\{G_k<x-u\}}{\sqrt {f(T)}}$$
or, equivalently, $$ -\sum_{\substack{k\geq 1\\ -\log p_k - T \in [x, x + \dd x]}} \frac{\ind_{\{U_k<\exp(-\eee^{-(x - u)})\}} -\exp(-\eee^{-(x-u)})}{\sqrt {f(T)}},$$
where the random variables $U_k :=\exp(-\eee^{-G_k})$, $k\in\N$ are independent and uniformly distributed on $[0,1]$. Recall that the number of points $U_k$ contributing to this sum is approximately $f(T)\dd x$. Since the uniform empirical process converges weakly to a Brownian bridge as $T\to\infty$,  we can approximate the latter sum by a process of the form $\sqrt {\dd x} B_x^0 (\eee^{-\eee^{-(x-u)}})$, where $(B_x^0 (t))_{t\in [0,1]}$ is a Brownian bridge. This approximation makes sense for every small interval of the form $[x, x + \dd x]$, and, moreover, disjoint intervals correspond to  independent Brownian bridges. Thus, as $T\to\infty$, the process $({\bf K}^\ast(T,u))_{u\in\R}$ can be approximated by an ``integral'' of independent infinitesimal contributions of the form $\sqrt {\dd x} B_x^0 (\exp(-\eee^{-(x-u)}))$ taken over all $x\in \R$. Finally, note that the Brownian bridge $B_x^0$ can be written as $B_x^0 (t) = B_x(t) - t B_x(1)$, where $(B_x(t))_{t\geq 0}$ is a Brownian motion. This naturally leads to a stochastic integral representation of the limit process given in Theorem \ref{thm:main2}.
\begin{proof}[Proof of Theorem \ref{thm:main2}]
In what follows it is tacitly assumed that the $x$-variable ranges in $\R$, and the $y$-variable ranges in $[0,1]$.
By the properties of the stochastic integrals with respect to a white noise,
$${\rm Cov}\,(Z(u), Z(v))=\frac 1 {\log 2}\int_{\R\times [0,\,1]} \left( \ind_{\left\{y \leq \eee^{-\eee^{-(x-u)}}\right\}} - \eee^{-\eee^{-(x-u)}} \right)  \left( \ind_{\left\{y \leq \eee^{-\eee^{-(x-v)}}\right\}} - \eee^{-\eee^{-(x-v)}} \right) \dd x \dd y.$$
For $a,b\in [0,1]$, 
$$\int_0^1 (\ind_{\{y\leq a\}} -a)(\ind_{\{y<b\}} - b) \dd y = \min (a,b) - ab.$$ Assume without loss of generality that $u\leq v$. Then $\eee^{-\eee^{-(x-u)}}\geq \eee^{-\eee^{-(x-v)}}$. With these at hand, fixing some $x\in \R$ and integrating over $y\in [0,1]$ we obtain
$${\rm Cov}\,(Z(u), Z(v))=\frac 1 {\log 2} \int_{\R}\left( \eee^{-\eee^{-(x-v)}} - \eee^{-\eee^{-(x-u)}} \eee^{-\eee^{-(x-v)}} \right) \dd x.$$
The substitution $t:= \eee^{-x}$ transforms the integral into a Frullani integral
$${\rm Cov}\,(Z(u), Z(v))=\frac 1 {\log 2} \int_0^\infty \left( \eee^{- t \eee^v} - \eee^{- t (\eee^u + \eee^ v)}\right) \frac{\dd t}{t}
=\frac{\log (1 + \eee^{-(v-u)})}{\log 2},$$
which completes the proof of the first claim.

Since, as $u\to 0$,
$$\log \big(1 + \eee^{-|u|}\big) = \log (2  - |u| + o(u)) = \log 2 - \frac 12 |u| + o(u),$$
the second claim is justified by the Kolmogorov-Chentsov theorem.
\end{proof}

Recall that the spectral density of a stationary process with covariance function $r$ is a Lebesgue integrable on $\R$ function $f$ whose Fourier transform is $r$, that is,
$$r(t) = \int_\R \eee^{\ii t x} f(x) \dd x, \qquad t\in \R.$$
\begin{assertion}\label{prop:spectral}
The spectral density of the process $Z$ 
is given by
$$
f(x) = \frac{1}{2\log 2}\Big(\frac{1}{\pi x^2} - \frac{1}{ x \sinh (\pi x)}\Big), \qquad x\in \R \backslash\{0\}, \quad f(0) = \frac {\pi}{12\log 2}.
$$
\end{assertion}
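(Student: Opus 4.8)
The plan is to show that the asserted $f$ is the inverse Fourier transform of $r(t):=\log(1+\eee^{-|t|})/\log 2$ and then appeal to Fourier inversion. Since $r$ is real, even, bounded, continuous and integrable on $\R$ (it decays like $\eee^{-|t|}$ at infinity), the candidate spectral density is $g(x):=(2\pi)^{-1}\int_\R \eee^{-\ii tx}r(t)\,\dd t$, and by evenness this reduces to a cosine transform,
$$g(x)=\frac{1}{\pi\log 2}\int_0^\infty \cos(tx)\,\log(1+\eee^{-t})\,\dd t.$$
If I can identify $g$ with the asserted $f$ and check that $f\in L^1(\R)$, then, because $r$ is continuous and both $r,f\in L^1(\R)$, the Fourier inversion theorem yields $r(t)=\int_\R \eee^{\ii tx}f(x)\,\dd x$, which is exactly the defining relation of the spectral density; uniqueness of $f$ is automatic from injectivity of the Fourier transform.

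I would evaluate the cosine integral by series expansion. For $t>0$ one has $\log(1+\eee^{-t})=\sum_{n\geq 1}\frac{(-1)^{n-1}}{n}\eee^{-nt}$, and since $\sum_{n\geq 1}\frac1n\int_0^\infty \eee^{-nt}\,\dd t=\sum_{n\geq 1}n^{-2}<\infty$, Tonelli's theorem permits term-by-term integration. Using $\int_0^\infty \cos(tx)\eee^{-nt}\,\dd t=\frac{n}{n^2+x^2}$, I obtain
$$g(x)=\frac{1}{\pi\log 2}\sum_{n\geq 1}\frac{(-1)^{n-1}}{n^2+x^2}.$$

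The heart of the argument, and the step I expect to require the most care, is summing this alternating series in closed form. I would use the Mittag-Leffler (partial-fraction) expansion of the hyperbolic sine, namely $\frac{\pi}{\sinh(\pi x)}=\frac1x+2x\sum_{n\geq 1}\frac{(-1)^n}{x^2+n^2}$, which follows from the classical expansion of $\pi/\sin(\pi z)$ evaluated at $z=\ii x$ together with $\sin(\pi\ii x)=\ii\sinh(\pi x)$. Rearranging gives
$$\sum_{n\geq 1}\frac{(-1)^{n-1}}{x^2+n^2}=\frac{1}{2x^2}-\frac{\pi}{2x\sinh(\pi x)},\qquad x\neq 0,$$
and substituting this into the expression for $g$ produces exactly $f(x)=\frac{1}{2\log 2}\big(\frac{1}{\pi x^2}-\frac{1}{x\sinh(\pi x)}\big)$ for $x\neq 0$.

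Finally I would treat the value at the origin and integrability. At $x=0$ the series is the Dirichlet eta value $\sum_{n\geq 1}(-1)^{n-1}n^{-2}=\pi^2/12$, whence $f(0)=\frac{1}{\pi\log 2}\cdot\frac{\pi^2}{12}=\frac{\pi}{12\log 2}$; the same value is recovered by letting $x\to 0$ in the closed form via $\sinh(\pi x)=\pi x\big(1+\tfrac{(\pi x)^2}{6}+\cdots\big)$, which shows that the singularity at the origin is removable and that $f$ is continuous there. Integrability of $f$ is then clear, since $f$ is continuous on all of $\R$ and $f(x)\sim \frac{1}{2\pi\log 2}\,x^{-2}$ as $|x|\to\infty$ (the term $1/(x\sinh(\pi x))$ decaying exponentially). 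This justifies the Fourier inversion step and completes the proof.
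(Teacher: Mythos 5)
Your proposal is correct and follows essentially the same route as the paper: both expand $\log(1+\eee^{-|t|})$ into the alternating exponential series, integrate term by term to get $\frac{1}{\pi}\sum_{n\geq 1}\frac{(-1)^{n-1}}{x^2+n^2}$, and close the sum with the partial-fraction expansion of $1/\sinh$. The only cosmetic difference is in the justification of the inversion step (the paper invokes positive definiteness of the covariance plus the inversion formula for integrable characteristic functions, while you verify $f\in L^1$ and apply Fourier inversion directly) and in how the term-by-term integration is controlled (Lagrange remainder versus Tonelli--Fubini); both are sound.
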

\begin{proof}
The function $r_1$ defined by $r_1(t) = \log (1+\eee^{-|t|})$ for $t\in\mr$ is continuous. Also, it is
positive definite as the covariance function of a stationary process. Therefore, it is the characteristic function of a finite measure $\mu$ (of total mass $\log 2$). Moreover, the function $r_1$ is integrable which implies that $\mu$ has a density $f_1$ which is given by the inversion formula
$$
f_1(x) = \frac 1 {2\pi} \int_\R \eee^{-\ii t x} r_1(t) \dd t,\quad x\in\mr.
$$
By the Taylor expansion with the Lagrange form of the remainder, 
$$
r_1(t)  = \log (1 + \eee^{-|t|}) = \sum_{n=1}^{N-1} \frac{(-1)^{n-1}}{n} \eee^{- n |t|} + R_N(t),\quad t\in\mr.
$$
Here, 
$|R_N(t)| \leq \frac 1N \eee^{-N |t|}$ for $t\in \R$, whence, for every $x\in\mr$, $\lim_{N\to\infty} \int_\R \eee^{-\ii t x} R_N(t) \dd t=0$. 
As a consequence, $$f_1(x)=\frac 1 {2\pi}\int_\R \eee^{-\ii t x} \sum_{n\geq 1} \frac{(-1)^{n-1}}{n} \eee^{- n |t|} \dd t
= \frac 1 {2\pi} \sum_{n\geq 1} \frac{(-1)^{n-1}}{n}  \int_\R \eee^{-\ii t x}   \eee^{- n |t|} \dd t = \frac 1 {\pi} \sum_{n\geq 1} \frac{(-1)^{n-1}}{x^2+n^2}.$$
Applying the partial fraction expansion $$\frac 1 {\sinh z} - \frac 1 z = 2 z \sum_{n\geq 1}\frac{(-1)^n}{z^2 + \pi^2 n^2},$$ we arrive at the claimed formula for $f=f_1/\log 2$.

\end{proof}

\section{Auxiliary results}

By Proposition \ref{prop:regular} given below condition \eqref{eq:6} ensures that
\begin{equation}\label{eq:reg}
\rho(x)~\sim~(\beta+1)^{-1}(\log x)^{\beta+1}\ell(\log x),\quad x\to\infty.
\end{equation}
To facilitate application of Theorem \ref{main} we give in Proposition \ref{prop:suff} sufficient conditions for \eqref{eq:reg} and \eqref{eq:6} expressed in terms of $p_k$.
\begin{assertion}\label{prop:suff}
Assume that the sequence $(p_k)_{k\in\mn}$ is eventually nonincreasing.

\noindent (a) If
\begin{equation}\label{eq:reg1}
-\log p_{\lfloor x\rfloor}~\sim~x^{1/b}L(x^{1/b}),\quad x\to\infty
\end{equation}
for some $b>0$ and some $L$ slowly varying at $\infty$. Then
\begin{equation}\label{eq:reg2}
\rho(x)~\sim~(\log x)^b(L^\#(\log x))^b,\quad x\to\infty,
\end{equation}
where $L^{\#}$ is the de Bruijn conjugate for $L$. In particular, relation \eqref{eq:reg} is secured by \eqref{eq:reg1} with $b=\beta+1$ and $L(x)=\ell_0^\#(x)$, that is,
\begin{equation}\label{eq:reg3}
-\log p_{\lfloor x\rfloor}~\sim~x^{1/(\beta+1)}\ell_0^{\#}(x^{1/(\beta+1)}),\quad x\to\infty.
\end{equation}
Here, $\ell_0(x):=(\ell(x)/(\beta+1))^{1/(\beta+1)}$.

\noindent (b) Assume that \eqref{eq:reg3} holds and that, for all $u\in\mr$,
\begin{equation}\label{eq:reg4}
\lim_{t\to\infty}\frac{s(t+ug(t))}{s(t)}=e^u,
\end{equation}
where $s(x):=1/p_{\lfloor x\rfloor}$ for $x\geq 1$ and $g$ is a nonnegative measurable function satisfying $$g(t)~\sim~(\beta+1)^{\beta/(\beta+1)} t^{\beta/(\beta+1)}(\ell(t^{1/(\beta+1)}))^{1/(\beta+1)},\quad t\to\infty$$ and
\begin{equation}\label{eq:reg5}
\ell(t(\ell(t))^{1/(\beta+1)})~\sim~\ell(t),\quad t\to\infty.
\end{equation}
Then \eqref{eq:6} holds.
\end{assertion}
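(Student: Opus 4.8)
The cornerstone of both parts is the observation that $\rho$ is, up to an asymptotically negligible integer‑part correction, the generalized inverse of the function $s(x)=1/p_{\lfloor x\rfloor}$. Since $(p_k)_{k\in\mn}$ is eventually nonincreasing, $s$ is eventually nondecreasing, and
$\rho(x)=\#\{k\in\mn:1/p_k\le x\}$ coincides asymptotically with $s^{\leftarrow}(x):=\sup\{t\ge 1: s(t)\le x\}$; equivalently, with $\phi:=\log s=-\log p_{\lfloor\cdot\rfloor}$ one has $\rho(x)\sim\phi^{\leftarrow}(\log x)$. The whole proof consists in reading off the stated relations from regular and de Haan variation properties of $s$ (or of $\phi$) by inversion, using the theory in \cite{Bingham+Goldie+Teugels:1989}. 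The only recurring care is to keep track of the floor function and of the passage between $s$, $\phi$ and their generalized inverses, which is routine once the leading asymptotics are in place.

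For part (a), I first note that $\phi(x)\sim x^{1/b}L(x^{1/b})$ is regularly varying of index $1/b>0$, so its asymptotic inverse is regularly varying of index $b$. Writing the candidate inverse as $\phi^{\leftarrow}(y)\sim y^{b}(L^{\#}(y))^{b}$ and substituting, the required identity $\phi(\phi^{\leftarrow}(y))\sim y$ reduces exactly to the defining relation of the de Bruijn conjugate, namely $L^{\#}(y)\,L(yL^{\#}(y))\to 1$; this both motivates and verifies the form of the inverse via the inversion theorem for regularly varying functions in \cite{Bingham+Goldie+Teugels:1989}. Since $\rho(x)\sim\phi^{\leftarrow}(\log x)$, relation \eqref{eq:reg2} follows. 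The ``in particular'' assertion is then immediate upon taking $b=\beta+1$ and $L=\ell_0^{\#}$: by the involution property of the de Bruijn conjugate, $L^{\#}\sim\ell_0$, and $(\ell_0(\log x))^{\beta+1}=\ell(\log x)/(\beta+1)$, which turns \eqref{eq:reg2} into \eqref{eq:reg}.

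For part (b), the plan is to recognize condition \eqref{eq:reg4} as the statement that $s$ belongs to de Haan's class $\Gamma$ with auxiliary function $g$. The $\Gamma$–$\Pi$ duality (de Haan's inversion theorem, Section 3.10 of \cite{Bingham+Goldie+Teugels:1989}) then yields that the inverse $\rho=s^{\leftarrow}$ belongs to the class $\Pi$ with auxiliary function $x\mapsto g(\rho(x))$, that is, $(\rho(\lambda x)-\rho(x))/g(\rho(x))\to\log\lambda$ for every $\lambda>0$. Hence it remains only to identify this auxiliary function, and I will show $g(\rho(x))\sim(\log x)^{\beta}\ell(\log x)$, which is precisely the normalization in \eqref{eq:6}. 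For this I insert the asymptotics $\rho(x)\sim(\beta+1)^{-1}(\log x)^{\beta+1}\ell(\log x)$ from \eqref{eq:reg} into the explicit expression for $g$; the powers of $(\beta+1)$ cancel and the powers of $\ell$ recombine to $(\log x)^{\beta}\ell(\log x)$, the sole nontrivial input being that the slowly varying factor survives the substitution. Indeed, $\rho(x)^{1/(\beta+1)}\sim(\beta+1)^{-1/(\beta+1)}(\log x)(\ell(\log x))^{1/(\beta+1)}$, so by $\ell(c\,z)\sim\ell(z)$ for slowly varying $\ell$ together with \eqref{eq:reg5} one gets $\ell(\rho(x)^{1/(\beta+1)})\sim\ell(\log x)$, completing the identification and hence \eqref{eq:6}.

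The main obstacle is the rigorous version of the inversion/duality step in part (b): one must check that the $\Gamma$‑property \eqref{eq:reg4} transfers to the generalized inverse with the claimed auxiliary function, which uses the local uniformity in $u$ of the convergence in \eqref{eq:reg4} (automatic for the class $\Gamma$) and the self‑neglecting property of $g$, and one must control the discrepancy between $s^{\leftarrow}$ and the integer‑valued $\rho$. Apart from this, the argument is bookkeeping with slowly varying functions; the delicate asymptotic matching has been arranged in advance precisely by the hypotheses \eqref{eq:reg3} and \eqref{eq:reg5}.
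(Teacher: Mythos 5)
Your proposal is correct and follows essentially the same route as the paper: part (a) via inversion of the regularly varying function $-\log p_{\lfloor\cdot\rfloor}$ (Proposition 1.5.15 in \cite{Bingham+Goldie+Teugels:1989}) together with the involution property of the de Bruijn conjugate, and part (b) via the $\Gamma$--$\Pi$ duality (Theorem 3.10.4 there) applied to $s$, followed by the identification $g(\rho(\eee^t))\sim t^{\beta}\ell(t)$ using \eqref{eq:reg} and \eqref{eq:reg5}. The points you flag as needing care (boundedness of $\rho-s^{\leftarrow}$, the substitution into the slowly varying factor) are exactly the ones the paper handles, and in the same way.
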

\begin{rem}
Condition \eqref{eq:reg5} is satisfied, for instance, by powers of logarithms and iterated logarithms. Further, it is satisfied by $\ell(x)=\exp((\log x)^c)$ if $c\in (0,1/2)$ and not satisfied if $c\in [1/2, 1)$, see p.~78 in \cite{Bingham+Goldie+Teugels:1989}. According to Theorem 2.3.3 in \cite{Bingham+Goldie+Teugels:1989}, $$\lim_{t\to\infty}\Big(\frac{\ell(\lambda_0t)}{\ell(t)}-1\Big)\log \ell(t)=0$$ for some $\lambda_0>1$ is a sufficient condition for \eqref{eq:reg5}.
\end{rem}
\begin{proof}[Proof of Proposition \ref{prop:suff}]
(a) Note that, as $x\to\infty$,
\begin{equation}\label{eq:inter1}
\rho(e^x)=\#\{k\geq 1: -\log p_k\leq x\}~\sim~\inf\{k\geq 1: -\log p_k>x\}~\sim~\inf\{y\geq 1: -\log p_{\lfloor y\rfloor}>x\}.
\end{equation}
By Proposition 1.5.15 in \cite{Bingham+Goldie+Teugels:1989}, $$\inf\{y\geq 1: -\log p_{\lfloor y\rfloor}>x\}~\sim~ x^b (L^{\#}(x))^b,\quad x\to\infty$$ which completes the proof of \eqref{eq:reg2}. The second part of (a) follows from the first and the fact that $\ell_0^{\#\#}(x)\sim \ell_0(x)$ as $x\to\infty$, see Theorem 1.5.13 in \cite{Bingham+Goldie+Teugels:1989}.

\noindent (b) We note in passing that since $s$ satisfies \eqref{eq:reg4}, it belongs to the class $\Gamma$, see p.~175 in \cite{Bingham+Goldie+Teugels:1989}.

Put $s^\leftarrow(x):=\inf\{y\geq 1: s(y)>x\}$ for $x>1$. By the argument leading to \eqref{eq:inter1}, the function $\rho-s^\leftarrow$ is bounded. Thus, under the sole assumption \eqref{eq:reg4}, for all $\lambda>0$, $$\lim_{t\to\infty}\frac{\rho(\lambda t)-\rho(t)}{g(\rho(t))}=\log \lambda$$ by Theorem 3.10.4 in \cite{Bingham+Goldie+Teugels:1989}. Thus, it remains to show that, under \eqref{eq:reg3}, $$g(\rho(e^t))~\sim~t^\beta\ell(t),\quad t\to\infty.$$ According to part (a), relation
\eqref{eq:reg} holds. Using this together with the assumptions imposed on $g$ and $\ell$ we obtain
\begin{multline*}
g(\rho(e^t))~\sim~ (\beta+1)^{\beta/(\beta+1)}((\beta+1)^{-1}t^{\beta+1}\ell(t))^{\beta/(\beta+1)}(\ell(t(\ell(t))^{1/(\beta+1)}))^{1/(\beta+1)}\\~\sim~t^\beta (\ell(t))^{\beta/(\beta+1)}(\ell(t))^{1/(\beta+1)} =t^\beta \ell(t),\quad t\to\infty.
\end{multline*}
\end{proof}

We recall that the Euler gamma function $\Gamma$ is defined by $\Gamma(x)=\int_0^\infty y^{x-1}e^{-y}{\rm d}y$ for $x>0$. For $j\in\mn$, put $$\rho_j(x):=\#\{{\rr}: |{\rr}|=j,~~ p_{\rr}\geq 1/x\},\quad x>0.$$ In particular, $\rho_1(x)=\rho(x)$ for $x>0$. In Proposition \ref{prop:regular} we prove that $\rho_j$ satisfies a counterpart of \eqref{eq:6}, thereby showing 
that, similarly to $\rho$, it belongs to de Haan's class $\Pi$. Also, we point out the first-order asymptotic behavior of $\rho_j$.
\begin{assertion}\label{prop:regular}
Under the assumptions of Theorem \ref{main}, for integer $j\geq 2$ and $\lambda>0$,
\begin{equation}\label{eq:5}
\lim_{t\to\infty}\frac{\rho_j(\lambda t)-\rho_j(t)}{(\log t)^{j\beta+j-1}(\ell(\log t))^j}=\frac{(\Gamma(\beta+1))^j}{\Gamma(j(\beta+1))}\log\lambda
\end{equation}
and
\begin{equation}\label{eq:7}
\lim_{t\to\infty} \frac{\rho_j(t)}{(\log t)^{j(\beta+1)}(\ell(\log t))^j}=\frac{(\Gamma(\beta+1))^j}{\Gamma(1+j(\beta+1))}.
\end{equation}
\end{assertion}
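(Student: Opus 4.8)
The plan is to pass to logarithmic variables and to recognise $\rho_j$ as a convolution power of $\rho=\rho_1$. Put $\nu:=\sum_{k\ge 1}\delta_{-\log p_k}$, a locally finite measure on $[0,\infty)$, and write $U(y):=\nu([0,y])=\rho(\eee^y)$. Since $-\log p_{\rr}=\sum_{i=1}^j(-\log p_{r_i})$ for ${\rr}=r_1\ldots r_j$, the definition of $\rho_j$ yields the key identity
$$\rho_j(\eee^y)=\nu^{\ast j}([0,y]),\qquad y\ge 0,$$
where $\nu^{\ast j}$ is the $j$-fold convolution. In these variables condition \eqref{eq:6} reads $U(y+s)-U(y)\sim s\,f(y)$ as $y\to\infty$ (with $f(y)=y^\beta\ell(y)$), and the $\beta=0$ assumption that $\ell$ is nondecreasing and unbounded guarantees $f(y)\to\infty$, so the integer jumps of $U$ are negligible relative to $f$. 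Consequently the $j=1$ case of \eqref{eq:7}, namely \eqref{eq:reg}, $U(y)\sim(\beta+1)^{-1}y^{\beta+1}\ell(y)$, follows from \eqref{eq:6} by Karamata's integration theorem applied to the nondecreasing $U$. After rescaling, the measures $\nu_y(\dd v):=\nu(y\,\dd v)/(y^{\beta+1}\ell(y))$ converge vaguely on $[0,\infty)$ to $\mathfrak m(\dd v):=v^\beta\,\dd v$ (check on intervals using regular variation of $U$); this is the single fact about $\nu$ feeding into the whole argument.

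I would prove \eqref{eq:5} and \eqref{eq:7} jointly by induction on $j$, carrying \eqref{eq:7} together with the increment asymptotics $\nu^{\ast j}((y,y+s])\sim s\,c_j\,y^{j(\beta+1)-1}(\ell(y))^j$, locally uniformly in the level and in $s$, where $c_j:=\Gamma(\beta+1)^j/\Gamma(j(\beta+1))$; the base case $j=1$ is \eqref{eq:6}/\eqref{eq:reg}. For the step I decompose $\nu^{\ast j}=\nu^{\ast(j-1)}\ast\nu$, so that
$$\nu^{\ast j}((y,y+s])=\int_{[0,\infty)}\nu^{\ast(j-1)}\big((y-u,\,y+s-u]\big)\,\nu(\dd u),$$
and split the range of $u$ at $(1-\delta)y$. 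On the main part $u\le(1-\delta)y$ the level $y-u\ge\delta y\to\infty$, so the induction hypothesis replaces the inner increment by $s\,c_{j-1}(y-u)^{(j-1)(\beta+1)-1}(\ell(y-u))^{j-1}$; after the substitution $u=yv$, the convergence $\nu_y\to\mathfrak m$, and slow variation of $\ell$, this contributes
$$s\,c_{j-1}\,y^{j(\beta+1)-1}(\ell(y))^j\int_0^{1-\delta}(1-v)^{(j-1)(\beta+1)-1}v^\beta\,\dd v,$$
and as $\delta\downarrow 0$ the Beta integral $B(\beta+1,(j-1)(\beta+1))$ combines with $c_{j-1}$ to give exactly $c_j$. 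On the boundary part $u>(1-\delta)y$ I would apply Tonelli to interchange the single $\nu$-increment with the $(j-1)$-fold mass: for fixed value $\tau$ of the first $j-1$ coordinates the $u$-interval has length $s$ and level $\to\infty$, so it is $\le C s\,f(y)$, and what remains is $\nu^{\ast(j-1)}([0,\delta y+s])=O(\delta^{(j-1)(\beta+1)})\,y^{(j-1)(\beta+1)}(\ell(y))^{j-1}$ by \eqref{eq:7} for $j-1$; the product is $O(\delta^{(j-1)(\beta+1)})$ times the main order and hence negligible. A routine $\limsup/\liminf$ argument in $\delta$ then yields \eqref{eq:5} for $j$, and \eqref{eq:7} for $j$ follows from it by Karamata's integration theorem (alternatively, and independently, \eqref{eq:7} follows directly from $\nu_y^{\otimes j}\to\mathfrak m^{\otimes j}$ weakly together with the fact that $\{\sum_i v_i\le 1\}$ is an $\mathfrak m^{\otimes j}$-continuity set, the Dirichlet integral giving $\mathfrak m^{\ast j}([0,1])=\Gamma(\beta+1)^j/\Gamma(1+j(\beta+1))$).

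The main obstacle is the uniformity needed to turn pointwise increment asymptotics into bounds that survive integration. Concretely, I must know that $\nu((w,w+s])/(s\,f(w))\to 1$ and that the $(j-1)$-fold increment divided by $s\,c_{j-1}w^{(j-1)(\beta+1)-1}(\ell(w))^{j-1}$ tends to $1$ \emph{uniformly} for $w\ge\delta y$ (equivalently, for the rescaled level bounded away from the endpoint) and for $s$ in compact sets; this is precisely why the induction hypothesis is phrased with local uniformity, and why the boundary estimate needs $f$ to be controlled uniformly over $w\ge(1-\delta)y-s$. These uniform statements, together with $\ell(y(1-v))/\ell(y)\to 1$ away from $v=1$, are supplied by the uniform convergence theorems for regularly varying functions and for de Haan's class $\Pi$ in \cite{Bingham+Goldie+Teugels:1989}, while the monotonicity of $U$ and of $\nu^{\ast j}$ furnishes the one-sided domination used both in the boundary estimate and in the Karamata step. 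The only genuinely delicate points are the thin overlap $y-u\in(-s,0]$ near $u\approx y$ and the verification that no mass concentrates at $v=1$; everything else is bookkeeping of Gamma factors.
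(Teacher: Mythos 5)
Your proposal is correct and shares the paper's skeleton --- pass to logarithmic variables, exploit the convolution identity $\rho_j(\eee^y)=\nu^{\ast j}([0,y])$ (the paper writes this as $V_{k+1}(t)=\int_{[0,t]}V(t-x)\,\dd V_k(x)$ with $V_j(t)=\rho_j(\eee^t)$), and induct on $j$ --- but it executes the key asymptotic step differently. Two differences are worth noting. First, the decomposition is mirrored: the paper places the increment on the \emph{first-generation} factor, writing $V_{k+1}(t+h)-V_{k+1}(t)=\int (V(t+h-x)-V(t-x))\,\dd V_k(x)$ plus boundary terms, so the only increment asymptotics it ever needs is the hypothesis \eqref{eq:6} itself at a fixed $h$ (no uniformity in the level beyond ``for all $t\ge t_0$''); you place the increment on the $(j-1)$-fold factor and integrate against $\nu(\dd u)$, which is why you must carry local uniformity of the induction hypothesis through the induction --- legitimate, but it is exactly the bookkeeping the paper's choice avoids. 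Second, the key lemma differs: after reducing the main term to the convolution $(f\ast V_k)(t)$, the paper computes its asymptotics by taking Laplace transforms and applying Karamata's Tauberian theorem (Theorem 1.7.1 in \cite{Bingham+Goldie+Teugels:1989}) in both directions, which is what forces the paper to arrange $f_1$ nondecreasing and right-continuous; you instead rescale $u=yv$, use vague convergence of $\nu_y$ to $v^\beta\,\dd v$, and land on the Beta integral $B(\beta+1,(j-1)(\beta+1))$, which indeed combines with $c_{j-1}$ to give $c_j$. Your route is more elementary and makes the constants transparent (including the Dirichlet-integral reading of \eqref{eq:7}, whereas the paper deduces \eqref{eq:7} from \eqref{eq:5} via Theorem 3.7.3 in \cite{Bingham+Goldie+Teugels:1989}); the paper's route outsources the uniformity and the boundary-of-the-simplex issues to the Tauberian machinery. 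Your boundary estimate ($O(\delta^{(j-1)(\beta+1)})$ times the main order) and the treatment of the thin overlap near $u\approx y$ are sound, and since $(j-1)(\beta+1)-1\ge\beta\ge 0$ for $j\ge 2$ there is no integrability problem at $v=1$, so I see no genuine gap.
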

\begin{proof}
It is more convenient to work with $V_j(t):=\rho_j(e^t)$, $j\in\mn$, $t\geq 0$, 
where $\rho_1:=\rho$. Then \eqref{eq:6} and \eqref{eq:5} can be written in an equivalent form:
for $j\in\mn$ and $h\in\mr$,
\begin{equation}\label{eq:5ref}
\lim_{t\to\infty}\frac{V_j(t+h)-V_j(t)}{f_j(t)}=h,
\end{equation}
where $f_j$ is a nonnegative function satisfying
\begin{equation}\label{eq:5refref}
f_j(t)~\sim~\frac{(\Gamma(\beta+1))^j}{\Gamma(j(\beta+1))}t^{j\beta+j-1}(\ell(t))^j,\quad t\to\infty.
\end{equation}
When $\beta>0$, by Theorem 1.5.3 in \cite{Bingham+Goldie+Teugels:1989}, we can and do assume that $f_1$ is nondecreasing. When $\beta=0$, we can put $f_1=\ell_1$ where $\ell_1$ is a nondecreasing on $[0,\infty)$ modification of $\ell$. Also, in both cases, adjusting if needed $f_1$ at discontinuity points, the number of these being at most countable, we can assume that $f_1$ is right-continuous. The latter property is needed for a proper application of Theorem 1.7.1 from \cite{Bingham+Goldie+Teugels:1989} below.

We use the mathematical induction on $j$. If $j=1$, then \eqref{eq:5ref} and \eqref{eq:5refref} are secured by \eqref{eq:6}. Assume that \eqref{eq:5ref} and \eqref{eq:5refref} hold for $j\leq k$. Then \eqref{eq:5} also holds and entails \eqref{eq:7} with $j\leq k$ by the implication (3.7.6) $\Rightarrow$ (3.7.8) of Theorem 3.7.3 in \cite{Bingham+Goldie+Teugels:1989}. 
In view of \eqref{eq:5ref} with $j=1$, given $\varepsilon\in (0,h)$ there exists $t_0>0$ such that $V(t+h)-V(t)\geq (h-\varepsilon)f(t)$ whenever $t\geq t_0$. Here and hereafter, we write $V$ and $f$ for $V_1$ and $f_1$. Using
\begin{multline*}
V_{k+1}(t)=\sum_{|{\rr}|=k+1}\1_{\{p_{\rr}\geq \eee^{-t}\}}=\sum_{|{\rr}|=k}\sum_{i\geq 1}\1_{\{p_{\rr}p_i\geq \eee^{-t}\}}=\sum_{|{\rr}|=k}V(t-|\log p_{\rr}|)\1_{\{|\log p_{\rr}|\leq t\}}\\=\int_{[0,\,t]}V(t-x){\rm d}V_k(x),
\end{multline*}
we write
\begin{multline*}
V_{k+1}(t+h)-V_{k+1}(t)=\int_{[0,\,t-t_0]}(V(t+h-x)-V(t-x)){\rm d}V_k(x)\\+\int_{(t-t_0,\,t]}(V(t+h-x)-V(t-x)){\rm d}V_k(x)+\int_{(t,\,t+h]}V(t+h-y){\rm d}V_k(y)=:A_k(t)+B_k(t)+C_k(t)
\end{multline*}
and analyze the summands separately. We first show that the contributions of $B_k$ and $C_k$ are negligible. By monotonicity of $V$ and the induction assumption,
\begin{multline*}
B_k(t)+C_k(t)\leq V(h+t_0)(V_k(t)-V_k(t-t_0))+V(h)(V_k(t+h)-V_k(t))\sim (V(h+t_0)t_0+V(h)h)f_k(t)\\=o(f_{k+1}(t)),\quad t\to\infty.
\end{multline*}

Also, we note that by monotonicity of $f$
\begin{equation*}
\int_{(t-t_0,\,t]}f(t-x){\rm d}V_k(x)\leq f(t_0)(V_k(t)-V_k(t-t_0))=o(f_{k+1}(t)),\quad t\to\infty.
\end{equation*}
Now we pass to the analysis of the principal term $A_k$:
\begin{multline*}
A_k(t)\geq (h-\varepsilon)\Big(\int_{[0,\,t]}f(t-x){\rm d}V_k(x)-\int_{(t-t_0,\,t]}f(t-x){\rm d}V_k(x)\Big)=(h-\varepsilon) \int_{[0,\,t]}f(t-x){\rm d}V_k(x)\\+o(f_{k+1}(t))=:
(h-\varepsilon)(f\ast V_k)(t)+o(f_{k+1}(t)),\quad t\to\infty.
\end{multline*}
Put $$\varphi(s):=\int_{[0,\infty)}e^{-sx}{\rm d}f(x)\quad \text{and}\quad \psi_k(s):=\int_{[0,\infty)}e^{-sx}{\rm d}V_k(x),\quad s\geq 0.$$ By the implication (1.7.1) $\Rightarrow$ (1.7.2) of Theorem 1.7.1 in \cite{Bingham+Goldie+Teugels:1989}, $$\int_{[0,\,\infty)}e^{-sx}{\rm d}(f\ast V_k)(x)=\varphi(s)\psi_k(s)~\sim~ (\Gamma(\beta+1))^{k+1}s^{-(k+1)\beta-k} (\ell(1/s))^{k+1},\quad s\to 0+.$$ Invoking now the implication (1.7.2) $\Rightarrow$ (1.7.1) of the same theorem yields $$(f\ast V_k)(t)~\sim~ \frac{(\Gamma(\beta+1))^{k+1}}{\Gamma((k+1)(\beta+1))} t^{(k+1)\beta+k}(\ell(t))^{k+1}~\sim~ f_{k+1}(t),\quad t\to\infty.$$ Combining fragments together we arrive at  $${\lim\inf}_{t\to\infty}\frac{V_{k+1}(t+h)-V_{k+1}(t)}{f_{k+1}(t)}\geq h.$$ More precisely, we first obtain the last inequality with $h-\varepsilon$ on the right-hand side and then let $\varepsilon$ tend to $0+$. The proof of the converse inequality for the limit superior is analogous, hence omitted. This completes the proof of \eqref{eq:5ref}, hence of \eqref{eq:5}. Relation \eqref{eq:7} now follows from \eqref{eq:5} by another appeal to Theorem 3.7.3 in \cite{Bingham+Goldie+Teugels:1989}. The proof of Proposition \ref{prop:regular} is complete.
\end{proof}

Proposition \ref{prop:meanHaan} is of principal importance for what follows. It essentially states that whenever the function $\rho_j$ belongs to the de Haan class $\Pi$, so does $t\mapsto \me K^{(j)}_t$. We shall use the standard notation $x\vee y=\max(x,y)$ and $x\wedge y=\min(x,y)$ for $x,y\in\mr$.
\begin{assertion}\label{prop:meanHaan}
Under the assumptions of Theorem \ref{main}, for $\lambda>0$ and $j\in\mn$,
\begin{equation}\label{eq:4444}
\lim_{t\to\infty}\frac{\Phi_j(\lambda t)-\Phi_j(t)}{(\log t)^{j\beta+j-1}(\ell(\log t))^j}=\frac{(\Gamma(\beta+1))^j}{\Gamma(j(\beta+1))}\log\lambda,
\end{equation}
where $\Phi_j(t):=\me K^{(j)}_t=\sum_{|{\rr}|=j}(1-\eee^{- tp_{\rr}})$ for $t\geq 0$. In particular,
\begin{equation}\label{eq:33331}
\Phi^\prime_j(t)=\sum_{|{\rr}|=j}p_{\rr} \eee^{-t p_{\rr}}~\sim~ \frac{(\Gamma(\beta+1))^j}{\Gamma(j(\beta+1))} \frac{(\log t)^{j\beta+j-1}(\ell(\log t))^j}{t},\quad t\to\infty.
\end{equation}
\end{assertion}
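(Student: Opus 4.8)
The plan is to realize $\Phi_j$ as a smoothing of $\rho_j$ and then transfer to it the de Haan property already established for $\rho_j$ in Proposition \ref{prop:regular}. Passing to the logarithmic scale, set $V_j(y):=\rho_j(\eee^y)$ for $y\geq 0$ and $V_j(y):=0$ for $y<0$, and write $x_{\rr}:=-\log p_{\rr}$. Starting from the elementary identity $1-\eee^{-\eee^{s-x}}=\int_{\mr}\psi(w)\1_{\{x\leq s-w\}}\,\dd w$ with the probability density $\psi(w):=\eee^{w-\eee^w}$, $w\in\mr$ (the substitution $u=\eee^w$ verifies both this identity and $\int_\mr\psi=1$), and applying Tonelli's theorem, I would first establish the convolution representation
$$\Phi_j(\eee^s)=\sum_{|{\rr}|=j}\big(1-\eee^{-\eee^{s-x_{\rr}}}\big)=\int_{\mr}\psi(w)\,V_j(s-w)\,\dd w,\qquad s\in\mr.$$

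With $h=\log\lambda$ and $s=\log t$, proving \eqref{eq:4444} then amounts to showing $\Phi_j(\eee^{s+h})-\Phi_j(\eee^s)\sim h f_j(s)$ as $s\to\infty$, where $f_j$ is the auxiliary function from \eqref{eq:5refref}, after which the explicit form of $f_j$ gives the stated constant. Writing
$$\frac{\Phi_j(\eee^{s+h})-\Phi_j(\eee^s)}{f_j(s)}=\int_\mr \psi(w)\,\frac{V_j(s+h-w)-V_j(s-w)}{f_j(s)}\,\dd w,$$
I would identify the pointwise limit of the integrand: by \eqref{eq:5ref} one has $\tfrac{V_j(s+h-w)-V_j(s-w)}{f_j(s-w)}\to h$ for each fixed $w$, while the regular variation of $f_j$ (of index $j\beta+j-1$) gives $f_j(s-w)/f_j(s)\to 1$, so the integrand tends to $h\psi(w)$. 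Since $\int_\mr\psi=1$, interchanging limit and integral yields the limit $h$, hence \eqref{eq:4444}.

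The interchange is the crux of the argument, and I would carry it out by splitting $\mr$ into $[-M,M]$ and its two tails, letting $s\to\infty$ first and $M\to\infty$ afterwards. On $[-M,M]$ the uniform convergence theorem for the class $\Pi$ (see Theorem 3.1.16 in \cite{Bingham+Goldie+Teugels:1989}) makes the convergence uniform, so this part contributes $h\int_{-M}^M\psi$. On the left tail $w\to-\infty$ one has $s-w>s$; using $\psi(w)\leq \eee^w$ together with a Potter bound $f_j(s-w)/f_j(s)\leq C(1+|w|)^{j\beta+j-1+\varepsilon}$ dominates the integrand by $C\eee^w(1+|w|)^{j\beta+j-1+\varepsilon}$, whose integral over $(-\infty,-M)$ vanishes as $M\to\infty$. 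On the right tail the doubly-exponential decay of $\psi$ overwhelms the at most polynomial growth of $V_j$: for $M<w<s-M'$ I would bound the increment by $Cf_j(s-w)\leq Cf_j(s)$ using monotonicity of $f_j$, leaving the factor $\int_M^\infty\psi\to 0$, while for $w\geq s-M'$ the increment is bounded by a constant and $\int_{s-M'}^\infty\psi$ decays faster than any power of $s$, so the contribution is $o(1)$ relative to $f_j(s)$. This is where the delicate interplay between the regularly varying growth of $V_j$ and the light tails of the kernel $\psi$ must be controlled, and I expect it to be the main obstacle.

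Finally, \eqref{eq:33331} follows from \eqref{eq:4444} by the monotone density theorem for the class $\Pi$. Each summand $1-\eee^{-tp_{\rr}}$ is concave and increasing and $\Phi_j(t)\leq t$, so $\Phi_j$ is finite and concave and $\Phi_j'$ is nonincreasing; sandwiching
$$t(\lambda-1)\,\Phi_j'(\lambda t)\leq \int_t^{\lambda t}\Phi_j'(u)\,\dd u=\Phi_j(\lambda t)-\Phi_j(t)\leq t(\lambda-1)\,\Phi_j'(t),$$
dividing by $(\log t)^{j\beta+j-1}(\ell(\log t))^j$, invoking \eqref{eq:4444}, and letting $\lambda\downarrow 1$ (so that $\log\lambda\sim\lambda-1$ and the normalizer varies slowly in $\log t$) transfers the $\Pi$-property to the claimed asymptotics of $\Phi_j'$.
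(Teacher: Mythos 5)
Your proof is correct and follows essentially the same route as the paper: under the substitution $x=\eee^w$, your convolution representation $\int_\mr\psi(w)V_j(s-w)\,\dd w$ is exactly the paper's identity $\Phi_j(\lambda t)-\Phi_j(t)=\int_0^t\eee^{-x}\bigl(\rho_j(\lambda t/x)-\rho_j(t/x)\bigr)\dd x+\int_t^{\lambda t}\eee^{-x}\rho_j(\lambda t/x)\,\dd x$, and the limit interchange is justified in both cases by the de Haan property of $\rho_j$ from Proposition \ref{prop:regular} together with Potter bounds and a crude bound near the edge where $\rho_j$ stays bounded. The only cosmetic differences are that you obtain the representation via Tonelli rather than integration by parts, and you prove the monotone-density step for \eqref{eq:33331} by hand where the paper cites Theorem 3.6.8 of \cite{Bingham+Goldie+Teugels:1989}.
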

\begin{proof}
We shall prove \eqref{eq:4444} in a form: for $\lambda>0$ and $j\in\mn$,
\begin{equation}\label{eq:33333}
\lim_{t\to\infty}\frac{ \Phi_j(\lambda t)-\Phi_j(t)}{g_j(t)}=\log \lambda,
\end{equation}
where $g_j(t):=f_j(\log t)$ for $t>0$, and $f_j$ is a nonnegative function satisfying \eqref{eq:5refref}. Assume first that $\lambda>1$. For any such an $\lambda$ and $t>0$, write
\begin{multline}
\Phi_j(\lambda t)-\Phi_j(t)=\sum_{|{\rr}|=j}\Big(\eee^{-t p_{\rr}}-\eee^{-\lambda t p_{\rr}}\Big)=\int_{(1,\infty)}\Big(\eee^{-t/x}-\eee^{-\lambda t/x}\Big){\rm d}\rho_j(x)\\=\int_1^\infty \Big(\eee^{-\lambda t/x}\frac{\lambda t}{x^2}-\eee^{-t/x}\frac{t}{x^2}\Big)\rho_j(x){\rm d}x\\=\int_0^t \eee^{-x}\Big(\rho_j\Big(\frac{\lambda t}{x}\Big)-\rho_j\Big(\frac{t}{x}\Big)\Big){\rm d}x+\int_{t}^{\lambda t}\eee^{-x}\rho_j\Big(\frac{\lambda t}{x}\Big){\rm d}x.\label{eq:covar}
\end{multline}
Here, the third equality is obtained with the help of integration by parts and the fact that $\lim_{x\to\infty}x^{-1}\rho_j(x)=0$ (according to Lemma 3 in \cite{Karlin:1967} this limit relation holds true for any counting function of probabilities; of course, in our setting it is also secured by the known asymptotics of $\rho_j$). The fourth equality results from the change of variables: $y=\lambda t/x$ for the first summand and $y=t/x$ for the second. Invoking \eqref{eq:covar} we infer, for $\lambda>1$, 
\begin{equation*}
\Phi_j(\lambda t)-\Phi_j(t)=\int_0^t \eee^{-x}\Big(\rho_j\Big(\frac{\lambda t}{x}\Big)-\rho_j\Big(\frac{t}{x}\Big)\Big){\rm d}x+o(1),\quad t\to\infty
\end{equation*}
because, by monotonicity of $\rho_j$,
\begin{equation*}
\int_t^{\lambda t} \eee^{-x}\rho_j\Big(\frac{\lambda t}{x}\Big){\rm d}x\leq \rho_j(\lambda) \int_t^{\lambda t}\eee^{-x}{\rm d}x~\to~0,\quad t\to\infty.
\end{equation*}
Thus, we have to show that
\begin{equation}
\lim_{t\to\infty}\frac{\int_0^t \eee^{-x}\Big(\rho_j\Big(\frac{\lambda t}{x}\Big)-\rho_j\Big(\frac{t}{x}\Big)\Big){\rm d}x}{g_j(t)}=\log \lambda.\label{eq:2}
\end{equation}
By Proposition \ref{prop:regular}, relation \eqref{eq:5} holds. This implies that given $\varepsilon>0$ there exists $t_1>0$ such that $$\frac{\rho_j(\lambda t/x)-\rho_j(t/x)}{g_j(t/x)}\leq \log \lambda+\varepsilon$$ whenever $x\in (0,t/t_1]$. Further, by Potter's bound (Theorem 1.5.6 (a) in \cite{Bingham+Goldie+Teugels:1989}), given $A>1$ and $\delta\in (0,1)$ there exists $t_2>0$ such that $$\frac{g_j(t/x)}{g_j(t)}\leq A (x^{-\delta}\vee x^\delta)$$ whenever $t\geq t_2$ and $x\in (0, t/t_2]$. Hence, $$\frac{\rho_j(\lambda t/x)-\rho_j(t/x)}{g_j(t)}\leq A(\log \lambda+\varepsilon)(x^{-\delta}\vee x^\delta)$$ whenever $x\in (0,t/t_0]$ and $t\geq t_0$, where $t_0:=t_1\vee t_2$. Since $\int_0^\infty \eee^{-x}(x^{-\delta}\vee x^\delta){\rm d}x<\infty$ and, by \eqref{eq:5}, for fixed $x>0$, $$\lim_{t\to\infty}\frac{\rho_j(\lambda t/x)-\rho_j(t/x)}{g_j(t)}=\log\lambda,$$ invoking the Lebesgue dominated convergence theorem yields
\begin{equation*}
\lim_{t\to\infty}\frac{\int_0^{t/t_0} \eee^{-x}\Big(\rho_j\Big(\frac{\lambda t}{x}\Big)-\rho_j\Big(\frac{t}{x}\Big)\Big){\rm d}x}{g_j(t)}=\log \lambda.
\end{equation*}
Noting that $$\int_{t/t_0}^t \eee^{-x}\Big(\rho_j\Big(\frac{\lambda t}{x}\Big)-\rho_j\Big(\frac{t}{x}\Big)\Big){\rm d}x\leq \rho_j(\lambda t_0) \int_{t/t_0}^t \eee^{-x}{\rm d}x~\to~0,\quad t\to\infty,$$ we arrive at \eqref{eq:2} which shows that \eqref{eq:33333} holds for $\lambda\geq 1$. Replacing in \eqref{eq:33333} $t$ with $t/\lambda$ ($\lambda>1$) and using the fact that $g_j$ is a slowly varying function we conclude that \eqref{eq:33333} also holds for $\lambda\in (0,1)$.

Since $\Phi_j^\prime$ is a nonincreasing function, Theorem 3.6.8 in \cite{Bingham+Goldie+Teugels:1989} in combination with \eqref{eq:33333} entail $$\Phi_j^\prime(t)~\sim~ \frac{g_j(t)}{t},\quad t\to\infty $$ which is equivalent to \eqref{eq:33331}.
\end{proof}

In Corollary \ref{prop:covariance} we identify the covariance of the limit process $Z$. In particular, the result suggests 
that the limit process is stationary. We recall that the covariance of random variables $X$ and $Y$ with finite second moments is defined by ${\rm Cov}\,(X,Y)=\me XY-\me X\me Y$.
\begin{cor}\label{prop:covariance}
Under the assumptions of Theorem \ref{main}, for $u,v\in\mr$ and $j\in\mn$,
\begin{equation}\label{eq:414}
{\rm Var}\,K^{(j)}_{\eee^T}~\sim~\frac{(\log 2)(\Gamma(\beta+1))^j}{\Gamma(j(\beta+1))}T^{j\beta+j-1}(\ell(T))^j,\quad T\to\infty
\end{equation}
and
\begin{equation}\label{eq:3}
\lim_{T\to\infty}\frac{{\rm Cov}\,(K^{(j)}_{\eee^{T+u}}, K^{(j)}_{\eee^{T+v}})}{{\rm Var}\,K^{(j)}_{\eee^T}}=\frac{\log\big(1+e^{-|u-v|}\big)}{\log 2}.
\end{equation}
\end{cor}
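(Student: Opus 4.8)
The plan is to reduce both assertions to the de Haan property of $\Phi_j$ established in Proposition \ref{prop:meanHaan}, via an exact identity for the covariance. Starting from the representation $K_t^{(j)}=\sum_{|{\rr}|=j}\1_{\{T_{\rr}\leq t\}}$ in \eqref{eq:jthgener} and recalling that the random variables $(T_{\rr})_{|{\rr}|=j}$ are independent, I would first observe that the covariance reduces to a diagonal sum: for ${\rr}\neq{\rr}'$ the indicators $\1_{\{T_{\rr}\leq s\}}$ and $\1_{\{T_{{\rr}'}\leq t\}}$ are independent, so only the terms with ${\rr}={\rr}'$ survive. Fixing $s\leq t$ and using $\{T_{\rr}\leq s\}\subseteq\{T_{\rr}\leq t\}$ together with $\mmp\{T_{\rr}\leq t\}=1-\eee^{-p_{\rr}t}$, each diagonal term equals $(1-\eee^{-p_{\rr}s})\eee^{-p_{\rr}t}=\eee^{-p_{\rr}t}-\eee^{-p_{\rr}(s+t)}$.

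The key step is to recognize that summing the diagonal terms telescopes into a difference of mean functions:
$${\rm Cov}\,(K^{(j)}_s, K^{(j)}_t)=\sum_{|{\rr}|=j}\big(\eee^{-p_{\rr}t}-\eee^{-p_{\rr}(s+t)}\big)=\Phi_j(s+t)-\Phi_j(t),\qquad s\leq t,$$
where $\Phi_j(t)=\sum_{|{\rr}|=j}(1-\eee^{-tp_{\rr}})$. Specializing to $s=t$ yields ${\rm Var}\,K^{(j)}_t=\Phi_j(2t)-\Phi_j(t)$, so relation \eqref{eq:414} is immediate from \eqref{eq:4444} applied with $\lambda=2$ and $t=\eee^T$ (noting $\log(\eee^T)=T$).

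For the covariance ratio I would take $s=\eee^{T+u}$ and $t=\eee^{T+v}$, assuming without loss of generality $u\leq v$, so that $s+t=\eee^T(\eee^u+\eee^v)$ and $t=\eee^T\eee^v$. I would then write the increment as a difference of two de Haan increments anchored at $\eee^T$,
$$\Phi_j\big(\eee^T(\eee^u+\eee^v)\big)-\Phi_j(\eee^T\eee^v)=\big[\Phi_j\big(\eee^T(\eee^u+\eee^v)\big)-\Phi_j(\eee^T)\big]-\big[\Phi_j(\eee^T\eee^v)-\Phi_j(\eee^T)\big],$$
and apply \eqref{eq:4444} to each bracket (with $\lambda=\eee^u+\eee^v$ and $\lambda=\eee^v$, respectively). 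Dividing by ${\rm Var}\,K^{(j)}_{\eee^T}$, for which the same auxiliary function appears with $\lambda=2$, the common prefactor cancels and the limit equals $(\log(\eee^u+\eee^v)-v)/\log 2$; the elementary simplification $\log(\eee^u+\eee^v)-v=\log(1+\eee^{u-v})=\log(1+\eee^{-|u-v|})$ then gives \eqref{eq:3}.

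There is no serious obstacle here: once the telescoping identity is in place the statement is a direct consequence of Proposition \ref{prop:meanHaan}. The only point requiring a little care is that all three relevant increments must be normalized by the single auxiliary function $a_j(\eee^T):=T^{j\beta+j-1}(\ell(T))^j$, so that the slowly varying prefactor $(\Gamma(\beta+1))^j/\Gamma(j(\beta+1))$ and the power-times-slowly-varying factor cancel cleanly in the ratio, leaving only the ratio of logarithms.
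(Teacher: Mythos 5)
Your proposal is correct and follows essentially the same route as the paper: independence of the $(T_{\rr})_{|{\rr}|=j}$ reduces the covariance to the exact identity ${\rm Cov}(K_s^{(j)},K_t^{(j)})=\Phi_j(s+t)-\Phi_j(s\vee t)$, after which \eqref{eq:414} is \eqref{eq:4444} with $\lambda=2$ and \eqref{eq:3} follows by writing the increment as a difference of two de Haan increments anchored at $\eee^T$ and cancelling the common auxiliary function. The paper presents the identity via expanding $\me K_s^{(j)}K_t^{(j)}$ into diagonal and off-diagonal parts rather than term-by-term covariances, but this is only a cosmetic difference.
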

\begin{proof}
We shall prove that
\begin{equation}\label{eq:3333}
\lim_{T\to\infty}\frac{{\rm Cov}\,(K^{(j)}_{\eee^{T+u}}, K^{(j)}_{\eee^{T+v}})}{f_j(T)}=\log\big(1+e^{-|u-v|}\big),
\end{equation}
where $f_j$ is a nonnegative function satisfying \eqref{eq:5refref}. Formula \eqref{eq:414} which states that
$${\rm Var}\,K^{(j)}_{\eee^T}~\sim~ (\log 2)f_j(T),\quad T\to\infty$$ is an immediate consequence of \eqref{eq:3333} with $u=v=0$. Formula \eqref{eq:3} is then implied by \eqref{eq:414} and \eqref{eq:3333}.

We start by showing that, for $s,t\geq 0$ and $j\in\mn$,
\begin{equation}
{\rm Cov}\,(K_s^{(j)}, K_t^{(j)})=
\sum_{|{\rr}|=j}\Big(\eee^{-(s\vee t)p_{\rr}}-\eee^{-(s+t)p_{\rr}}\Big)=\me K^{(j)}_{s+t}-\me K^{(j)}_{s\vee t}.\label{eq:covar112}
\end{equation}
Indeed, using \eqref{eq:jthgener} we obtain $$K_s^{(j)}K_t^{(j)}=\sum_{|{\rr}_1|=j}\1_{\{T_{\rr_1}\leq s\}}\sum_{|{\rr}_2|=j}\1_{\{T_{\rr_2}\leq t\}}=\sum_{|{\rr}|=j}\1_{\{T_{\rr}\leq s\wedge t\}}+\sum_{|{\rr}_1|=j}\1_{\{T_{\rr_1}\leq s\}}\sum_{|{\rr}_2|=j,\,\rr_2\neq \rr_1}\1_{\{T_{\rr_2}\leq t\}}.$$ Since the random variables $\1_{\{T_{\rr_1}\leq s\}}$ and $\sum_{|{\rr}_2|=j,\,\rr_2\neq \rr_1}\1_{\{T_{\rr_2}\leq t\}}=K_t^{(j)}-\1_{\{T_{\rr_1}\leq t\}}$ are independent we infer $${\rm Cov}\,(K_s^{(j)}, K_t^{(j)})=\sum_{|{\rr}|=j}(\mmp\{T_{\rr}\leq s\wedge t\}-\mmp\{T_{\rr}\leq s\}\mmp\{T_{\rr}\leq t\})=\sum_{|{\rr}|=j}\Big(\eee^{-(s\vee t)p_{\rr}}-\eee^{-(s+t)p_{\rr}}\Big).$$ The second equality in \eqref{eq:covar112} follows from $\me K^{(j)}_t=\sum_{|\rr|=j}(1-\eee^{-tp_{\rr}})$ for $t\geq 0$.

Putting in \eqref{eq:covar112} $s=\eee^{T+u}$ and $t=\eee^{T+v}$ for $u,v\in\mr$ and $T\geq 0$ and invoking Proposition \ref{prop:meanHaan} we infer, as $T\to\infty$,
\begin{equation*}
\frac{{\rm Cov}\,(K_{\eee^{T+u}}^{(j)}, K_{\eee^{T+v}}^{(j)})}{f_j(T)}=\frac{\me K_{\eee^{T}(\eee^u+\eee^v)}^{(j)}-\me K_{\eee^{T}(\eee^u\vee \eee^v)}^{(j)}}{f_j(T)}~\to~\log\Big(\frac{\eee^u+\eee^v}{\eee^u\vee \eee^v}\Big) =\log \big(1+\eee^{-|u-v|}\big). 
\end{equation*}
\end{proof}

In Corollary \ref{prop:covariance2} we provide a crude asymptotics of ${\rm Cov}\,(K^{(i)}_{\eee^{T+u}}, K^{(j)}_{\eee^{T+v}})$ for $i\neq j$. This result is sufficient for one part of the proof of Theorem \ref{main}.
\begin{cor}\label{prop:covariance2}
Under the assumptions of Theorem \ref{main}, for $u,v\in\mr$ and $i,j\in\mn$, $i<j$,
\begin{equation}\label{eq:1234}
\lim_{T\to\infty}\frac{{\rm Cov}\,(K^{(i)}_{\eee^{T+u}}, K^{(j)}_{\eee^{T+v}})}{({\rm Var}\,K^{(i)}_{\eee^T}{\rm Var}\,K^{(j)}_{\eee^T})^{1/2}}=0.
\end{equation}
\end{cor}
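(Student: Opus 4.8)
The plan is to expand the cross-covariance over pairs of boxes, to discard the (numerous) independent pairs, and to bound the few surviving terms by a quantity of the exact order of ${\rm Var}\,K^{(i)}_{\eee^T}$, which is negligible against the geometric mean $({\rm Var}\,K^{(i)}_{\eee^T}\,{\rm Var}\,K^{(j)}_{\eee^T})^{1/2}$. Setting $s:=\eee^{T+u}$ and $t:=\eee^{T+v}$, I would first use \eqref{eq:jthgener} to write
$${\rm Cov}\,(K^{(i)}_s, K^{(j)}_t)=\sum_{|\rr_1|=i}\sum_{|\rr_2|=j}{\rm Cov}\,\big(\1_{\{T_{\rr_1}\leq s\}}, \1_{\{T_{\rr_2}\leq t\}}\big).$$
A single ball travels along one root-to-leaf ray, so it can visit both $\rr_1$ from generation $i$ and $\rr_2$ from generation $j>i$ only if $\rr_1$ is an ancestor of $\rr_2$. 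Consequently, unless $\rr_2=\rr_1\sigma$ for some $\sigma$ with $|\sigma|=j-i$, the streams of balls entering $\rr_1$ and $\rr_2$ arise from thinning the Poisson ball process into disjoint categories and are hence independent, so the corresponding covariance vanishes; this is the cross-generation counterpart of the within-generation independence of $(T_{\rr})_{|\rr|=j}$ used earlier. Parametrizing each generation-$j$ box as $\rr\sigma$ with $|\rr|=i$, $|\sigma|=j-i$ and $p_{\rr\sigma}=p_{\rr}p_\sigma$, only ancestor--descendant terms remain:
$${\rm Cov}\,(K^{(i)}_s, K^{(j)}_t)=\sum_{|\rr|=i}\sum_{|\sigma|=j-i}{\rm Cov}\,\big(\1_{\{T_{\rr}\leq s\}}, \1_{\{T_{\rr\sigma}\leq t\}}\big).$$

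To bound a surviving term I would use the elementary identity ${\rm Cov}\,(\1_A,\1_B)=\mmp(A^c)\mmp(B)-\mmp(A^c\cap B)\leq \mmp(A^c)\mmp(B)$, valid for any two events. With $A=\{T_{\rr}\leq s\}$, $B=\{T_{\rr\sigma}\leq t\}$ and the exponentiality of $T_{\rr}$ this gives
$${\rm Cov}\,\big(\1_{\{T_{\rr}\leq s\}}, \1_{\{T_{\rr\sigma}\leq t\}}\big)\leq \eee^{-p_{\rr}s}\big(1-\eee^{-p_{\rr}p_\sigma t}\big).$$
Summing over $\sigma$ and applying $1-\eee^{-x}\leq x$ together with $\sum_{|\sigma|=j-i}p_\sigma=1$ bounds the inner sum by $p_{\rr}t$, whence
$${\rm Cov}\,(K^{(i)}_s, K^{(j)}_t)\leq t\sum_{|\rr|=i}p_{\rr}\eee^{-p_{\rr}s}=t\,\Phi_i^\prime(s).$$

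It then remains to insert the asymptotics. By \eqref{eq:33331}, $t\,\Phi_i^\prime(s)=\eee^{T+v}\Phi_i^\prime(\eee^{T+u})\sim \eee^{v-u}\frac{(\Gamma(\beta+1))^i}{\Gamma(i(\beta+1))}T^{i\beta+i-1}(\ell(T))^i$, which by \eqref{eq:414} equals $(\log 2)^{-1}\eee^{v-u}(1+o(1))\,{\rm Var}\,K^{(i)}_{\eee^T}$. Since each surviving covariance is that of two increasing events in the underlying arrival processes and is therefore nonnegative, we have $0\leq {\rm Cov}\,(K^{(i)}_s,K^{(j)}_t)\leq t\,\Phi_i^\prime(s)$. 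Dividing by $({\rm Var}\,K^{(i)}_{\eee^T}\,{\rm Var}\,K^{(j)}_{\eee^T})^{1/2}$ and invoking \eqref{eq:414} once more leaves, apart from a bounded prefactor, the quantity $\big({\rm Var}\,K^{(i)}_{\eee^T}/{\rm Var}\,K^{(j)}_{\eee^T}\big)^{1/2}$, which is asymptotic to a constant multiple of $T^{(i-j)(\beta+1)/2}(\ell(T))^{(i-j)/2}$ and tends to $0$ because $i<j$ (when $\beta=0$ this reads $(T\ell(T))^{(i-j)/2}\to 0$, using that $\ell$ is eventually nondecreasing and unbounded). A squeeze then yields \eqref{eq:1234}.

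The delicate point is to produce a majorant of the right order. The exact inner sum is $\Phi_{j-i}(p_{\rr}t)$, and retaining it would reconstruct the mean $\me K^{(j)}_t$, of order $T^{j(\beta+1)}(\ell(T))^j$, which is far too large and would make the argument fail. Replacing $\Phi_{j-i}(p_{\rr}t)$ by its linear majorant $p_{\rr}t$ is precisely what reconstitutes the derivative $\Phi_i^\prime(s)$ and pins the cross-covariance at the order $f_i(T)$ of the smaller of the two variances.
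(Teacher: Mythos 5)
Your proposal is correct and follows essentially the same route as the paper: both reduce to the ancestor--descendant pairs, majorize the surviving covariances to obtain the bound ${\rm Cov}\,(K^{(i)}_s,K^{(j)}_t)\leq t\,\Phi_i^\prime(s)$, and then conclude via \eqref{eq:33331} and \eqref{eq:414} that this is $O({\rm Var}\,K^{(i)}_{\eee^T})=o\big(({\rm Var}\,K^{(i)}_{\eee^T}{\rm Var}\,K^{(j)}_{\eee^T})^{1/2}\big)$. The only cosmetic difference is that the paper first writes the exact (manifestly nonnegative) covariance formula and then bounds it, whereas you use the inequality ${\rm Cov}\,(\1_A,\1_B)\leq \mmp(A^c)\mmp(B)$ together with a positive-association argument for the lower bound.
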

\begin{proof}
We shall show that
\begin{equation}\label{eq:inter}
{\rm Cov}\,(K^{(i)}_{\eee^{T+u}}, K^{(j)}_{\eee^{T+v}})=O(f_i(T)),\quad T\to\infty,
\end{equation}
where $f_i$ is a nonnegative function satisfying \eqref{eq:5refref} with $i$ replacing $j$. Since $f_i(T)\sim {\rm Var}\,K^{(i)}_{\eee^T}/\log 2$ and, according to 
\eqref{eq:414}, ${\rm Var}\,K^{(i)}_{\eee^T}=o\big({\rm Var}\,K^{(j)}_{\eee^T}\big)$ as $T\to\infty$, we conclude that \eqref{eq:inter} ensures \eqref{eq:1234}.

Arguing as in the proof of Corollary \ref{prop:covariance} we obtain, for $s,t\geq 0$ and $i,j\in\mn$, $i<j$,
\begin{equation*}
{\rm Cov}\,(K_s^{(i)}, K_t^{(j)})= \sum_{|{\rr}_1|=i}\eee^{-sp_{\rr_1}}\sum_{|{\rr}_2|=j-i}\big(\eee^{-(t-s)_+p_{\rm r_1}p_{\rr_2}}-\eee^{-tp_{\rr_1}p_{\rr_2}}\big).
\end{equation*}
Further, $${\rm Cov}\,(K_s^{(i)}, K_t^{(j)})\leq \sum_{|{\rr}_1|=i}\eee^{-sp_{\rr_1}}\sum_{|{\rr}_2|=j-i}\big(1-\eee^{-tp_{\rr_1}p_{\rr_2}}\big)\leq t\sum_{|{\rr}|=i}p_{\rr} \eee^{-sp_{\rr}}=t\Phi_i^\prime(s),$$ where $\Phi_i(s)=\me K^{(i)}_s$ for $s\geq 0$. In view of \eqref{eq:33331}, $$\Phi_i^\prime(s)\sim s^{-1}f_i(\log s),\quad s\to\infty.$$ Putting $s=\eee^{T+u}$ and $t=\eee^{T+v}$ for $u,v\in\mr$ and then sending $T\to\infty$ we arrive at \eqref{eq:inter}.
\end{proof}

The two results given next are needed for the proof of tightness in Theorem \ref{main}. Corollary \ref{lem:de_haan_theorem} follows from Proposition \ref{prop:regular} and Corollary \ref{prop:covariance} with the help of an additional argument.
\begin{cor}\label{lem:de_haan_theorem}
Under the assumptions of Theorem \ref{main}, for $j\in\mn$ and $a>0$,
$$\sum_{|{\rr}|=j} \eee^{-a |t + \log p_{\rr}|}~\sim~ \frac{2}{a\log 2}{\rm Var}\,K^{(j)}_{\eee^t},\quad t\to \infty.
$$
\end{cor}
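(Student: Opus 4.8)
The plan is to recognize the left-hand side as a Stieltjes integral against the counting function of the points $-\log p_{\rr}$ and then to invoke an abelian-type statement for the de Haan class. Writing $V_j(t):=\rho_j(\eee^t)=\#\{\rr:|\rr|=j,\,-\log p_{\rr}\le t\}$ as in the proof of Proposition \ref{prop:regular}, and using $\log p_{\rr}\le 0$, we have
$$\sum_{|\rr|=j}\eee^{-a|t+\log p_{\rr}|}=\int_{[0,\infty)}\eee^{-a|t-x|}\,\dd V_j(x)=:I(t).$$
Since Corollary \ref{prop:covariance}, in the form \eqref{eq:414}, gives ${\rm Var}\,K^{(j)}_{\eee^t}\sim(\log 2)f_j(t)$, where $f_j$ is the nonnegative, eventually nondecreasing auxiliary function of \eqref{eq:5refref} (regularly varying of index $\gamma_j:=j(\beta+1)-1\ge 0$), the assertion is equivalent to $I(t)\sim (2/a)f_j(t)$ as $t\to\infty$, because $\int_{\R}\eee^{-a|s|}\,\dd s=2/a$.

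To prove this I would pass to the image of the measure $f_j(t)^{-1}\dd V_j$ under $x\mapsto s:=t-x$, that is, the measure $\mu_t$ on $(-\infty,\,t]$ determined by $\mu_t((s_1,s_2])=f_j(t)^{-1}\big(V_j(t-s_1)-V_j(t-s_2)\big)$, so that $I(t)/f_j(t)=\int_{(-\infty,\,t]}\eee^{-a|s|}\,\mu_t(\dd s)$. The de Haan relation \eqref{eq:5ref} says precisely that $\mu_t((s_1,s_2])\to s_2-s_1$ for all fixed $s_1<s_2$, i.e. $\mu_t$ converges to Lebesgue measure on every compact interval. Hence, for each fixed $M>0$, the central part $\int_{-M}^{M}\eee^{-a|s|}\,\mu_t(\dd s)$ converges to $\int_{-M}^{M}\eee^{-a|s|}\,\dd s$ (for large $t$ the constraint $x\ge 0$ is inactive on $[-M,M]$), and the latter tends to $2/a$ as $M\to\infty$.

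It then remains to control the tails $\{|s|>M\}$ uniformly in large $t$, which is where the real work lies. For the part $0\le s\le t$ with $s>M$ I would split the integral over the intervals $x\in[t-k-1,\,t-k)$, $M\le k\le t$, bound $\eee^{-a|s|}\le\eee^{-ak}$ there, and use the increment estimate $V_j(\tau+1)-V_j(\tau)=O(f_j(\tau))$ (from the uniform convergence theorem for the class $\Pi$ and the Potter-type bounds of Section 3.6 in \cite{Bingham+Goldie+Teugels:1989}), which for large $\tau$ gives $V_j(t-k)-V_j(t-k-1)\le C f_j(t-k)\le C f_j(t)$ by monotonicity of $f_j$; the finitely many small-argument increments are uniformly bounded and hence negligible against $f_j(t)\to\infty$. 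This tail is thus at most $C f_j(t)\sum_{k\ge M}\eee^{-ak}$. For the far tail $s\le -M$ (that is $x\to\infty$) I would split over $x\in[t+k,\,t+k+1)$, $k\ge M$, and use $V_j(t+k+1)-V_j(t+k)\le Cf_j(t+k)\le C'f_j(t)(1+k/t)^{\gamma_j+\delta}$ by Potter's bound, so that the exponential weight makes the bound $C'f_j(t)\sum_{k\ge M}\eee^{-ak}(1+k)^{\gamma_j+\delta}$; the series is finite and tends to $0$ as $M\to\infty$. Both tails are therefore $\le f_j(t)\,\varepsilon(M)$ with $\varepsilon(M)\to 0$ uniformly in large $t$.

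The main obstacle is precisely this uniform-integrability step: by \eqref{eq:7} the measure $\mu_t$ carries an amount of mass growing like $V_j(x)\sim c\,x^{j(\beta+1)}(\ell(x))^j$ into the region $x\to\infty$, and it is only the exponential factor $\eee^{-a(x-t)}$, combined with the Potter bounds on the increments of $V_j$, that keeps this contribution negligible relative to $f_j(t)$. Once the tails are controlled, a standard $\varepsilon$-argument,
$$\limsup_{t\to\infty}\Big|\frac{I(t)}{f_j(t)}-\frac 2a\Big|\le\Big|\int_{-M}^{M}\eee^{-a|s|}\,\dd s-\frac 2a\Big|+C\varepsilon(M)\longrightarrow 0,\quad M\to\infty,$$
yields $I(t)\sim(2/a)f_j(t)$, and dividing by ${\rm Var}\,K^{(j)}_{\eee^t}\sim(\log 2)f_j(t)$ gives the claimed constant $2/(a\log 2)$.
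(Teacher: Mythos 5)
Your argument is correct, and it reaches the same constant $2/(a\log 2)$ by a genuinely more hands-on route than the paper. Both proofs begin identically, by writing the sum as a Stieltjes integral against the counting function ($\int_{(1,\infty)}\eee^{-a|t-\log x|}\,\dd\rho_j(x)$ in the paper, equivalently your $\int_{[0,\infty)}\eee^{-a|t-x|}\,\dd V_j(x)$). At that point the paper integrates by parts, splits at $x=\eee^t$, and observes that each of the two resulting expressions is exactly of the form treated by de Haan's theorem (Theorem 3.7.1 in \cite{Bingham+Goldie+Teugels:1989}) for a $\Pi$-varying function with auxiliary function $(\log 2)^{-1}{\rm Var}\,K^{(j)}_{\eee^t}$; each summand then contributes $(a\log 2)^{-1}{\rm Var}\,K^{(j)}_{\eee^t}$, and the proof is two lines. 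You instead prove the needed special case of that theorem from scratch: the de Haan relation \eqref{eq:5ref} gives vague convergence of the normalized increment measure to Lebesgue measure on compacts (your identification of the central part with $\int_{-M}^{M}\eee^{-a|s|}\dd s\to 2/a$ is sound, since $f_j(t-s)\sim f_j(t)$ for fixed $s$ by regular variation), and the tails are killed by the exponential weight together with $V_j(\tau+1)-V_j(\tau)=O(f_j(\tau))$ and Potter bounds; the polynomial growth $(1+k)^{\gamma_j+\delta}$ against $\eee^{-ak}$ is exactly the right comparison, and the finitely many small-argument increments are indeed harmless because $f_j(t)\to\infty$ under the hypotheses (this is where the extra assumption on $\ell$ when $\beta=0$ is used). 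What the paper's route buys is brevity and a clean reduction to a standard reference; what yours buys is self-containedness and an explicit picture of where the mass lives — in particular it makes visible that it is only the exponential decay, not the slow variation of ${\rm Var}\,K^{(j)}_{\eee^t}$ alone, that tames the growing mass $V_j(x)$ for $x>t$. The only cosmetic caveat is that for $j\ge 2$ the paper does not arrange $f_j$ to be monotone, so your bound $f_j(t-k)\le f_j(t)$ should either be read for a nondecreasing version of $f_j$ (available by Theorem 1.5.3 in \cite{Bingham+Goldie+Teugels:1989}, since $\gamma_j>0$) or replaced by the Potter bound you already use on the other tail; this does not affect the validity of the argument.
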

\begin{proof}
Observe that
\begin{align*}
\sum_{|{\rr}|=j} \eee^{-a |t + \log p_{\rr}|}&= \int_{(1,\infty)} \eee^{-a |t-\log x|} \dd \rho_j(x)
&= \int_{(1,\,\eee^t]} \eee^{-a (t-\log x)} \dd \rho_j(x) + \int_{(\eee^t,\,\infty)} \eee^{a (t-\log x)} \dd \rho_j(x).
\end{align*}
Integration by parts yields
\begin{equation}\label{eq:8}
\sum_{|{\rr}|=j} \eee^{-a|t + \log p_{\rr}|}= \Big(\rho_j(\eee^t) - \eee^{-at} \int_{(1,\,\eee^t]}ax^a \rho_j(x)  \frac{\dd x}{x}\Big)
+\Big(\eee^{at}\int_{(\eee^t,\,\infty)}a x^{-a} \rho_j(x)  \frac{\dd x}{x}-\rho_j(\eee^t)\Big).
\end{equation}
In view of \eqref{eq:5} and \eqref{eq:414}, for all $\lambda>0$,
\begin{equation*}
\lim_{t\to\infty}\frac{\rho_j(\lambda t)-\rho_j(t)}{(\log 2)^{-1}{\rm Var}\,K^{(j)}_t}=\log\lambda,
\end{equation*}
and the function $t\mapsto {\rm Var}\,K^{(j)}_t$ is slowly varying at $\infty$. Hence, by de Haan's theorem (Theorem 3.7.1 in~\cite{Bingham+Goldie+Teugels:1989}), each summand on the right-hand side of \eqref{eq:8} is asymptotically equivalent to $(a\log 2)^{-1}{\rm Var}\,K^{(j)}_{\eee^t}$ as $t\to\infty$.
\end{proof}

\begin{lemma}\label{lem:gumbel_density}
Fix some $A>0$. Let $G$ be a standard Gumbel random variable, that is, $\mmp\{G \leq x\} =\exp(- \eee^{-x})$ for 
$x\in \R$. Then there is a constant $C=C(A)>0$ such that
$$\mmp\{s+u< -G \leq s+v\} \leq C (v-u) \, \eee^{-|s|}$$ for all $u<v$ from the interval $[-A,\,A]$ and all $s\in \R$.
\end{lemma}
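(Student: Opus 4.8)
The plan is to express the probability exactly, strip away the Gumbel law, and reduce the whole statement to a uniform bound on the density of $-G$ over a window of width $2A$ centred at $s$. Since $\mmp\{G\le x\}=\exp(-\eee^{-x})$, the event $\{s+u<-G\le s+v\}$ coincides with $\{-(s+v)\le G<-(s+u)\}$, so that
$$\mmp\{s+u<-G\le s+v\}=\exp(-\eee^{s+u})-\exp(-\eee^{s+v}).$$
Equivalently, $-G$ has density $\phi(x):=\eee^{x}\eee^{-\eee^{x}}$, and the probability equals $\int_{s+u}^{s+v}\phi(x)\,\dd x$. Because $s+u,s+v\in[s-A,s+A]$, this is at most $(v-u)\sup_{x\in[s-A,\,s+A]}\phi(x)$, and so the entire claim reduces to the inequality
$$\sup_{x\in[s-A,\,s+A]}\phi(x)\le C(A)\,\eee^{-|s|},\qquad s\in\mr.$$

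To establish this I would exploit the shape of $\phi$. From $\phi'(x)=\eee^{x}\eee^{-\eee^{x}}(1-\eee^{x})$ one sees that $\phi$ increases on $(-\infty,0]$ and decreases on $[0,\infty)$, with maximum $\phi(0)=\eee^{-1}$. I then split into three ranges of $s$. For $s\le -A$ the window lies in the increasing part, so the supremum is attained at $x=s+A$ and $\phi(s+A)=\eee^{s+A}\eee^{-\eee^{s+A}}\le \eee^{A}\eee^{s}=\eee^{A}\eee^{-|s|}$. For $s\ge A$ the window lies in the decreasing part, so the supremum is $\phi(s-A)$, and one checks that $\eee^{s}\phi(s-A)=\eee^{2s-A}\eee^{-\eee^{s-A}}$ stays bounded as $s\to\infty$, since the double exponential dominates every power of $\eee^{s}$; hence $\phi(s-A)\le C_1\eee^{-|s|}$ for a suitable $C_1$. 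For $|s|<A$ one simply uses $\phi\le \eee^{-1}$ together with $\eee^{-|s|}>\eee^{-A}$, which yields $\sup\phi\le \eee^{A-1}\eee^{-|s|}$. Taking $C(A)$ to be the largest of $\eee^{A}$, $C_1$ and $\eee^{A-1}$ completes the argument.

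The only genuinely substantive point is the uniform tail control, and within it the regime $s\to-\infty$ is the binding one: there $\phi(x)\sim\eee^{x}$, so the target decay $\eee^{-|s|}=\eee^{s}$ is matched exactly up to the constant $\eee^{A}$ produced by the window width, leaving no slack. The right tail $s\to+\infty$ is comfortable because of the double-exponential factor (the estimate is in fact $o(\eee^{-|s|})$ there), and the bounded range of $s$ is handled by the trivial bound $\phi\le\eee^{-1}$; neither of these presents any difficulty.
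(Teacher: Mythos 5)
Your proof is correct and follows essentially the same route as the paper: write the probability as an integral of the density $\phi(x)=\eee^{x}\eee^{-\eee^{x}}$ of $-G$, use its unimodality (increasing on $(-\infty,0]$, decreasing on $[0,\infty)$) to locate the supremum over the window, and split into the three regimes $s\le -A$, $s\ge A$, $|s|<A$, controlling the tails by $\phi(x)\le c\,\eee^{-|x|}$. The only cosmetic difference is that you bound $\sup_{[s-A,s+A]}\phi$ directly in each case, while the paper first records the global bound $g(x)\le c_1\eee^{-|x|}$ and then applies monotonicity; the content is identical.
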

\begin{proof}
The density of $-G$, namely, $g(x) = \eee^{x}\exp(-\eee^x)$ increases on the negative halfline and decreases on the positive halfline. Moreover, we can find $c_1>0$ such the inequality $g(x) \leq c_1 \eee^{-|x|}$ holds for $x\in\mr$. If $s>A$, then $0 < s + u < s+v$ and 
$$
\mmp\{s+u<-G \leq s+v\}\leq (v-u) g(s+u) \leq (v-u) g(s-A) \leq c_1 \eee^{A} (v-u) \eee^{-s}.
$$
If $s<-A$, then $s+u < s+v <0$ and a similar estimate holds true. 
Finally, if $s\in [-A,\,A]$, then $s+u$ and $s+v$ are contained in the interval $[-2A,\,2A]$ and 
$$
\mmp\{s+u< -G \leq s+v\} \leq C (v-u),
$$
where $C:=\sup_{x\in [-2A,\,2A]} g(x)$.
\end{proof}

\section{Proof of Theorem \ref{main}}

At the first step we prove \eqref{relation_main5000} for one coordinate. At the second step we derive \eqref{relation_main5000} in full generality.

\noindent {\sc Step 1}. Fix $j\in\mn$. Thus, now we are focussing at the relation
\begin{equation}\label{relation_main500}
{\bf K}^{(j)}(T)~\Rightarrow~Z_j,\quad T\to\infty
\end{equation}
in the $J_1$-topology on $D$. The structure of the subsequent proof is standard: we prove weak convergence of finite-dimensional distributions and then check tightness.

According to the Cram\'{e}r-Wold device relation, weak convergence of finite-dimensional distributions is equivalent to the following limit relation
\begin{equation}    \label{eq:Cramer-Wold device}
\sum_{i=1}^k \alpha_i {\bf K}^{(j)}(T,u_i)~{\overset{{\rm d}}\longrightarrow}~ \sum_{i=1}^k \alpha_i Z_j(u_i),\quad T\to\infty
\end{equation}
for all $k\in\mn$, all $\alpha_1,\ldots, \alpha_k\in\mr$ and all
$-\infty<u_1<\ldots<u_k<\infty$. For $u\in\mr$, $T\geq 0$ and $\rr\in \mn^j$ (a box belonging to the $j$th generation), put $$\tilde B_{\rr}(T,u):=\1_{\{T_{\rr}\leq \eee^{T+u}\}}-\mmp\{T_{{\rr}}\leq \eee^{T+u}\}.$$ In view of \eqref{eq:jthgener} the left-hand side in \eqref{eq:Cramer-Wold device} is then equal to
$$\frac{\sum_{|{\rr}|=j}\sum_{i=1}^k \alpha_i\tilde B_{\rr}(T,u_i)}{({\rm Var}\,K^{(j)}_{\eee^T})^{1/2}}$$ and as such is the normalized (infinite) sum of independent centered random variables with finite second moments. Hence, in order to prove \eqref{eq:Cramer-Wold device}, it suffices to show (see, for instance, Theorem 3.4.5 on p.~129 in \cite{Durrett:2010}) that
\begin{align}       \label{eq:mgale CLT1}
\lim_{T\to\infty}\me\Big(\sum_{i=1}^k \alpha_i {\bf K}^{(j)}(T,u_i)\Big)^2=\me\Big(\sum_{i=1}^k \alpha_i Z_j(u_i)\Big)^2=\sum_{i=1}^k\alpha_i^2+2\sum_{1\leq i<\ell\leq k}\alpha_i  \alpha_\ell \frac{\log (1+\eee^{-(u_\ell-u_i)})}{\log 2}
\end{align}
and
\begin{equation}    \label{eq:mgale CLT2}
\lim_{T\to\infty} \sum_{|{\rr}|=j}\me\Big( \frac{(\sum_{i=1}^k \alpha_i \tilde B_{\rr}(T,u_i))^2}{{\rm Var}\,K^{(j)}_{\eee^T}}\1_{\{|\sum_{i=1}^k \alpha_i \tilde B_{\rr}(T,u_i)|>\varepsilon ({\rm Var}\,K^{(j)}_{\eee^T})^{1/2}\}}\Big)=0
\end{equation}
for all $\varepsilon>0$. Relation \eqref{eq:mgale CLT1} immediately follows from Corollary \ref{prop:covariance}. In view of the inequality
\begin{eqnarray}\label{eq:tech}
(a_1+\ldots+a_k)^2\1_{\{|a_1+\ldots+a_k|>y\}}&\leq&
(|a_1|+\ldots+|a_k|)^2\1_{\{|a_1|+\ldots+|a_k|>y\}}\notag\\&\leq&
k^2 (|a_1| \vee\ldots\vee |a_k|)^2\1_{\{k(|a_1| \vee\ldots\vee
|a_k|)>y\}}\notag\\&\leq&
k^2\big(a_1^2\1_{\{|a_1|>y/k\}}+\ldots+a_k^2\1_{\{|a_k|>y/k\}}\big).
\end{eqnarray}
which is valid for real $a_1,\ldots, a_m$ and $y>0$, relation \eqref{eq:mgale CLT2} is a consequence of
\begin{equation*}
\lim_{T\to\infty} \sum_{|{\rr}|=j}\me \Big( \frac{(\tilde B_{\rr}(T,u))^2}{{\rm Var}\,K^{(j)}_{\eee^T}}\1_{\{|\tilde B_{\rr}(T,u)|>\varepsilon ({\rm Var}\,K^{(j)}_{\eee^T})^{1/2}\}}\Big)=0,
\end{equation*}
where $u\in\R$ is fixed. The latter holds trivially, for $|\tilde B_{\rr}(T,u)|\leq 1$ a.s.\ whence the indicator $\1_{\{|\tilde B_{\rr}(T,u)|>\ldots\}}$ is equal to $0$ for large $T$. The proof of \eqref{eq:Cramer-Wold device} is complete.

Our next purpose is to prove that the family of laws of the stochastic processes $({\bf K}^{(j)}(T))_{T>0}$ is tight on the Skorokhod space $D[-A,\,A]$ for any fixed $A>0$. To this end, we shall show that there is a constant $C_1>0$ such that
\begin{equation}\label{eq:tight}
\me ({\bf K}^{(j)}(T,v)-{\bf K}^{(j)}(T,u))^2 ({\bf K}^{(j)}(T,w)-{\bf K}^{(j)}(T,v))^2\leq C_1 (w-u)^2
\end{equation}
for all $u<v<w$ in the interval $[-A,\,A]$ and large $T>0$. Together with the already proved fact that ${\bf K}^{(j)}(T,0)$ converges in distribution, this would imply the claimed tightness by a well-known sufficient condition (see Theorem 13.5 and formula (13.14) on p.~143 in~\cite{Billingsley:1999}).

For the box $\rr$ with $|{\rr}|=j$, introduce the following Bernoulli random variables
\begin{equation}\label{eq:B_k_C_k_def}
L_{\rr}:= \ind_{\{T+u < -\log p_{\rr} - G_{\rr} \leq T+v\}}, \qquad   M_{\rr}:= \ind_{\{T+v < -\log p_{\rr} - G_{\rr} \leq T+w\}}
\end{equation}
as well as their centered versions
$$
\widetilde L_{\rr}:= L_{\rr} - \E L_{\rr}, \qquad \widetilde M_{\rr} := M_{\rr} - \E M_{\rr}.
$$
We note that all these random variables depend on $u,v,w$ and $T$, the fact suppressed in our notation.
Let
$$
q_{\rr_1}:= \mmp\{L_{\rr_1} = 1\} = \E L_{\rr_1}, \qquad  z_{\rr_2}:= \mmp\{M_{\rr_2} = 1\} = \E M_{\rr_2}.
$$
Recalling~\eqref{eq:B_k_C_k_def} and using Lemma~\ref{lem:gumbel_density} we have
\begin{equation}\label{eq:q_k_est}
q_{\rr_1} \leq C  (v-u) \eee^{-|T+\log p_{\rr_1}|},
\qquad
z_{\rr_2} \leq C  (w-v) \eee^{-|T+\log p_{\rr_2}|},\quad \rr_1,\rr_2\in \mathbb{N}^j.
\end{equation}
In view of \eqref{eq:jthgener},
$$({\rm Var}\,K^{(j)}_{\eee^T})^{1/2} ({\bf K}^{(j)}(T,v)-{\bf K}^{(j)}(T,u))
= \sum_{|{\rr}_1|=j} \widetilde L_{\rr_1}$$ and $$({\rm Var}\,K^{(j)}_{\eee^T})^{1/2}({\bf K}^{(j)}(T,w)-{\bf K}^{(j)}(T,v))
= \sum_{|{\rr}_2|=j} \widetilde M_{\rr_2},$$
so that \eqref{eq:tight} is equivalent to
$$\E \Big(\sum_{|{\rr}_1|=j}\widetilde L_{\rr_1}\Big)^2 \Big(\sum_{|{\rr}_2|=j} \widetilde M_{\rr_2}\Big)^2 \leq C_1 (w-u)^2 \big({\rm Var}\,K^{(j)}_{\eee^T}\big)^2$$
for all $u<v<w$ in the interval $[-A,\,A]$ and large $T>0$. Multiplying the terms out, our task reduces to showing that
$$\sum_{\rr_1,\rr_2,\rr_3,\rr_4\in \N^j}\E \left[\widetilde L_{\rr_1}\widetilde L_{\rr_3}\widetilde M_{\rr_2}\widetilde M_{\rr_4} \right] \leq C_1 (w-u)^2 \big({\rm Var}\,K^{(j)}_{\eee^T}\big)^2.$$ If ${\rr}_1$ is not equal to any of the tuples ${\rr_2},{\rr_3},{\rr_4}$, then $\widetilde L_{\rr_1}$ is independent of the vector $(\widetilde L_{\rr_3}, \widetilde M_{\rr_2}, \widetilde M_{\rr_4})$, and we can take $\widetilde L_{\rr_1}$ out of the expectation implying that $\E [\widetilde L_{\rr_1}\widetilde L_{\rr_3}\widetilde M_{\rr_2}\widetilde M_{\rr_4}] = 0$. More generally,  if one of the tuples $\rr_1,\rr_2,\rr_3,\rr_4$ is not equal to any of the remaining ones, then the expectation vanishes. In the following, we shall consider collections $(\rr_1,\rr_2,\rr_3,\rr_4)$ in which every tuple is equal to some other tuple.

\vspace*{2mm}
\noindent
\textsc{Case 1.}
Consider first the case when ${\rr_1}\neq {\rr_3}$. Then, either ${\rr_2} ={\rr_1}$ and ${\rr_4} ={\rr_3}$, or ${\rr_2} ={\rr_3}$ and ${\rr_4} ={\rr_1}$. Consider the first case because the second one is similar.  The corresponding contribution is
$$
\sum_{\rr_1\neq \rr_3} \E \left[\widetilde L_{\rr_1}\widetilde L_{\rr_3}\widetilde M_{\rr_1}\widetilde M_{\rr_3}\right]
=
\sum_{\rr_1\neq \rr_3} \E \left[\widetilde L_{\rr_1}\widetilde M_{\rr_1}\right] \E \left[ \widetilde L_{\rr_3}\widetilde M_{\rr_3}\right].
$$
Since $L_{\rr_1}$ and $M_{\rr_1}$ cannot be equal to $1$ at the same time, $$\E \left[\widetilde L_{\rr_1}\widetilde M_{\rr_1}\right]
=-\me L_{\rr_1}\me M_{\rr_1}=-q_{\rr_1}z_{\rr_1}.$$
Analogously, $\E \left[\widetilde L_{\rr_3}\widetilde M_{\rr_3}\right]=-q_{\rr_3}z_{\rr_3}$. 
It follows that
$$
\sum_{\rr_1\neq \rr_3} \E \left[\widetilde L_{\rr_1}\widetilde L_{\rr_3}\widetilde M_{\rr_1}\widetilde M_{\rr_3}\right]=\sum_{\rr_1\neq \rr_3} q_{\rr_1}z_{\rr_1}q_{\rr_3}z_{\rr_3}\leq \sum_{|{\rr}_1|=j}q_{\rr_1}\sum_{|{\rr}_2|=j}z_{\rr_2}.
$$
Invoking \eqref{eq:q_k_est}, we arrive at
\begin{equation}\label{eq:est_sum_q_r}
\sum_{|{\rr}_1|=j}q_{\rr_1}\sum_{|{\rr}_2|=j}z_{\rr_2}\leq C^2 (w-u)^2 \Big(\sum_{|{\rr}|=j}\eee^{-|T+\log p_{\rr}|}\Big)^2\leq 8(\log 2)^{-2} C^2 (w-u)^2 \big({\rm Var}\,K^{(j)}_{\eee^T}\big)^2
\end{equation}
for all $u<v<w$ in the interval $[-A,\,A]$ and large $T>0$, where we have used Corollary \ref{lem:de_haan_theorem} for the last inequality.

\vspace*{2mm}
\noindent
\textsc{Case 2.}
Let now ${\rr}_1 =\rr_3$. Then we must also have $\rr_2=\rr_4$, for otherwise the expectation $\E [\widetilde L_{\rr_1}\widetilde L_{\rr_3}\widetilde M_{\rr_2}\widetilde M_{\rr_4}]$ vanishes.  The corresponding contribution can be estimated as follows:
\begin{multline*}
\sum_{r_1, r_2\in\mn^j}\E \left[\widetilde L_{\rr_1} \widetilde L_{\rr_1} \widetilde M_{\rr_2}\widetilde M_{\rr_2}\right]
=
\sum_{\rr_1\neq \rr_2} \E \left[\widetilde L_{\rr_1}^2 \right] \E \left[\widetilde M_{\rr_2}^2\right]
+
\sum_{|{\rr}|=j}\E \left[\widetilde L_{\rr}^2\widetilde M_{\rr}^2\right]
\\
\leq \sum_{\rr_1\neq \rr_2} q_{\rr_1}z_{\rr_2}+ 2\sum_{|{\rr}|=j} q_{\rr}z_{\rr}\leq 2 \sum_{|{\rr}_1|=j}q_{\rr_1}\sum_{|{\rr}_2|=j}z_{\rr_2}\leq 16(\log 2)^{-2} C^2 (w-u)^2 \big({\rm Var}\,K^{(j)}_{\eee^T}\big)^2
\end{multline*}
for all $u<v<w$ in the interval $[-A,\,A]$ and large $T>0$, by~\eqref{eq:est_sum_q_r}. Here, we have used the inequalities $\E [\widetilde L_{\rr_1}^2]=q_{\rr_1}(1-q_{\rr_1})\leq q_{\rr_1}$, $\E [\widetilde M_{\rr_2}^2]=z_{\rr_2}(1-z_{\rr_2})\leq z_{\rr_2}$ and
\begin{multline*}
\E \left[\widetilde L_{\rr}^2  \widetilde M_{\rr}^2\right]=q_{\rr} (1-q_{\rr})^2(-z_{\rr})^2 +z_{\rr}(1-z_{\rr})^2 (-q_{\rr})^2 + (1-q_{\rr}-z_{\rr})(-q_{\rr})^2(-z_{\rr})^2\\=q_{\rr}z_{\rr}(q_{\rr}+z_{\rr}-3q_{\rr}z_{\rr})
\leq 2 q_{\rr}z_{\rr}.
\end{multline*}
The first equality stems from the fact that $L_{\rr}$ and $M_{\rr}$ cannot be equal to $1$ simultaneously.

\noindent {\sc Step 2}. In view of the already proved tightness of the families of laws of coordinates on the left-hand side of \eqref{relation_main5000}, the family of laws of the stochastic processes $\big({\bf K}^{(j)}(t)\big)_{j\geq 1}$, $t>0$ is tight on $D^\mn$ equipped with the product $J_1$ topology. Thus, according to the Cram\'{e}r-Wold device relation, it remains to prove convergence in distribution of the linear combinations of the coordinates on the left-hand side of \eqref{relation_main5000} to the corresponding linear combinations of the coordinates on the right-hand side of \eqref{relation_main5000}. To the end, we first show that, for all $k\in\mn$, all $\alpha_1,\ldots, \alpha_k\in\mr$ and all $-\infty<u_1<\ldots<u_k<\infty$,
\begin{equation}\label{eq:finconv}
\sum_{j=1}^k \alpha_j {\bf K}^{(j)}(T,u_j)~{\overset{{\rm d}}\longrightarrow}~ \sum_{j=1}^k \alpha_i Z_j(u_j),\quad T\to\infty.
\end{equation}
Observe that the left-hand side is the infinite sum of independent centered random variable with finite second moments as is seen from a representation
\begin{multline*}
\sum_{j=1}^k \alpha_j {\bf K}^{(j)}(T,u_j)\\=\sum_{|{\rr}_1|=1}\Big(\frac{\alpha_1 \tilde B_{{\rr}_1}(T,u_1)}{({\rm Var}\,K^{(1)}_{\eee^T})^{1/2}}+\frac{\alpha_2\sum_{|{\rr}_2|=1}\tilde B_{{\rr}_1{\rr}_2}(T,u_2)}{({\rm Var}\,K^{(2)}_{\eee^T})^{1/2}}+\ldots+\frac{\alpha_k\sum_{|{\rr}_k|=k-1}\tilde B_{{\rr}_1{\rr}_k}(T,u_k)}{({\rm Var}\,K^{(k)}_{\eee^T})^{1/2}} \Big),
\end{multline*}
where an individual ${\rr}_1{\rr}_i$ with $|{\rr}_i|=i-1$ is a successor of ${\rr}_1$ in the $i$th generation. Note that, for the given ${\rr}_1$, the variables $\tilde B_{{\rr}_1}(T,u_1)$, $\sum_{|{\rr}_2|=1}\tilde B_{{\rr}_1{\rr}_2}(T,u_2),\ldots$ are dependent, yet the terms of the series (which correspond to different ${\rr}_1$) are independent.

To prove \eqref{eq:finconv}, we have to show that
\begin{align}\label{eq:limi}
\lim_{T\to\infty}\me\Big(\sum_{j=1}^k \alpha_j {\bf K}^{(j)}(T,u_j)\Big)^2=\me\Big(\sum_{j=1}^k \alpha_j Z_j(u_j)\Big)^2=\sum_{j=1}^k\alpha_i^2
\end{align}
and that the Lindeberg condition (a counterpart of \eqref{eq:mgale CLT2}) holds. Formula \eqref{eq:limi} is secured by Corollary \ref{prop:covariance2}. In view of \eqref{eq:tech} and the proof for Step 1 the Lindeberg condition follows if we can check that, for all $\varepsilon>0$ and all $j\in\mn$,
\begin{equation}\label{eq:linde}
\lim_{T\to\infty} \sum_{|{\rr}_1|=1} \me \Big(\big({\rm Var}\,K^{(j)}_{\eee^T}\big)^{-1}\Big(\sum_{|{\rr}_2|=j-1}\tilde B_{{\rr}_1{\rr}_2}(T,\,0)\Big)^2\1_{\{|\sum_{|{\rr}_2|=j-1} \tilde B_{{\rr}_1{\rr}_2}(T,\,0)|>\varepsilon ({\rm Var}\,K^{(j)}_{\eee^T})^{1/2}\}}\Big)=0.
\end{equation}
Here, a possibility of investigating $\tilde B_{{\rr}_1{\rr}_2}(T,\,0)$ in place of $\tilde B_{{\rr}_1{\rr}_2}(T,\,u)$ for $u\in\mr$ is justified by the regular variation of the function $T\mapsto {\rm Var}\,K^{(j)}_{\eee^T}$, see \eqref{eq:414}. We shall prove that
\begin{equation*}
\lim_{T\to\infty} \big({\rm Var}\,K^{(j)}_{\eee^T}\big)^{-2} \sum_{|{\rr}_1|=1} \me \Big(\sum_{|{\rr}_2|=j-1}\tilde B_{{\rr}_1{\rr}_2}(T,\,0)\Big)^4=0
\end{equation*}
which obviously entails \eqref{eq:linde}. To this end, put $a_{{\rr}_1{\rr}_2}(T):=1-\exp(-\eee^T p_{{\rr}_1{\rr}_2})$ for ${\rr}_1, {\rr_2}\in \mathcal{R}$ and $T>0$ and write
\begin{multline*}
\me \Big(\sum_{|{\rr}_2|=j-1}\tilde B_{{\rr}_1{\rr}_2}(T,\,0)\Big)^4=\sum_{|{\rr}_2|=j-1} \me \big(\tilde B_{{\rr}_1{\rr}_2}(T,\,0))^4\\+3\sum_{|{\rr}_2|=j-1, |{\rr}_3|=j-1, {\rr_2}\neq {\rr}_3}\me (\tilde B_{{\rr}_1{\rr}_2}(T,\,0))^2\me (\tilde B_{{\rr}_1{\rr}_3}(T,\,0))^2 \\=\sum_{|{\rr}_2|=j-1}(1-a_{{\rr}_1{\rr}_2}(T))(a_{{\rr}_1{\rr}_2}(T)-3a^2_{{\rr}_1{\rr}_2}(T)(1-a_{{\rr}_1{\rr}_2}(T)))\\+3\sum_{|{\rr}_2|=j-1, |{\rr}_3|=j-1, {\rr_2}\neq {\rr}_3}a_{{\rr}_1{\rr}_2}(T)(1-a_{{\rr}_1{\rr}_2}(T))a_{{\rr}_1 {\rr}_3}(T)(1-a_{{\rr}_1{\rr}_3}(T))\\\leq \sum_{|{\rr}_2|=j-1}(1-a_{{\rr}_1{\rr}_2}(T))a_{{\rr}_1{\rr}_2}(T)
+3\Big(\sum_{|{\rr}_2|=j-1}(1-a_{{\rr}_1{\rr}_2}(T))a_{{\rr}_1{\rr}_2}(T)\Big)^2={\rm Var}\,K^{(j-1)}_{\eee^T p_{{\rr}_1}}+3({\rm Var}\,K^{(j-1)}_{\eee^T p_{{\rr}_1}})^2\\\leq (1+3 \me K^{(j-1)}_{\eee^T})  {\rm Var}\,K^{(j-1)}_{\eee^T p_{{\rr}_1}}
\end{multline*}
having utilized $\sum_{|{\rr}_2|=j-1}(1-a_{{\rr}_1{\rr}_2}(T))a_{{\rr}_1{\rr}_2}(T)\leq \sum_{|{\rr}_2|=j-1}a_{{\rr}_1{\rr}_2}(T)=\me K^{(j-1)}_{\eee^Tp_{{\rr}_1}}$ and monotonicity of $t\mapsto \me K_t^{(j-1)}$ for the last inequality.
Hence, $$\sum_{|{\rr}_1|=1} \me \Big(\sum_{|{\rr}_2|=j-1}\tilde B_{{\rr}_1{\rr}_2}(T,\,0)\Big)^4\leq (1+3 \me K^{(j-1)}_{\eee^T})  {\rm Var}\,K^{(j)}_{\eee^T}=o\big(\big({\rm Var}\,K^{(j)}_{\eee^T}\big)^2\big),\quad T\to\infty.$$ To explain the last equality, we recall that, according to \eqref{eq:414},
\begin{equation*}
{\rm Var}\,K^{(j)}_{\eee^T}~\sim~ {\rm const}\, T^{j\beta+j-1}(\ell(T))^j,\quad T\to\infty.
\end{equation*}
Further, a combination of \eqref{eq:7} and Theorem 1 in \cite{Karlin:1967} yields
$$\me K^{(j-1)}_{\eee^T}~\sim~ \rho_{j-1}(\eee^T)~\sim~ {\rm const}\, T^{(j-1)(\beta+1)} (\ell (T))^{j-1},\quad T\to\infty.$$ It remains to note that $(j-1)(\beta+1)<j\beta+j-1$ when $\beta>0$ and that, by assumption, $\lim_{T\to\infty}\ell(T)=\infty$ when $\beta=0$. This finishes the proof of \eqref{eq:linde}, hence of \eqref{eq:finconv}. To complete the proof of Theorem \ref{main}, combine now \eqref{eq:finconv} with the arguments given at Step 1.

\begin{rem}
While proving some functional limit theorems with discontinuous converging processes $(X(T,u))$, say and continuous weak limits it may be sufficient to check tightness in the space of continuous functions on $[0,\infty)$. The simplest (if applicable) sufficient condition ensuring such a tightness is: there exist constants $\gamma>0$ and $\delta>1$ and a nondecreasing continuous function $F$ such that the inequality $$\me |X(T,u)-X(T,v)|^\gamma~\leq~ |F(u)-F(v)|^\delta$$ holds for all $u,v\geq 0$ and large $T$.

The latter inequality does not hold for the process ${\bf K}^{(j)}(T)$. Indeed, with $T$ fixed, $v=0$ and $u\to 0+$,
$$\me |{\bf K}^{(j)}(T,u)-{\bf K}^{(j)}(T,0)|^\gamma~\sim~ C_Tu$$ for a constant $C_T$. To check this, note that in the chosen setting the variable $K^{(j)}_{\eee^{T+u}}-K^{(j)}_{\eee^T}$ is very close to $\pi(\eee^{T+u})-\pi(\eee^T)$, for which explicit calculations are possible.
\end{rem}

\section{Proof of Corollary \ref{main1}}

Plainly, for $j,n\in\mn$ and $t\geq 0$, $K^{(j)}_t=\mathcal{K}^{(j)}_{\pi(t)}$ and conversely
\begin{equation}\label{eq:impo}
\mathcal{K}^{(j)}_n=K^{(j)}_{S_n}.
\end{equation}
Here, in the first equality the random variable $\pi(t)$ is independent of $(\mathcal{K}^{(j)}_n)_{n\in\mn}$, whereas in the second equality $S_n$ and $(K^{(j)}_t)_{t\geq 0}$ are dependent.

Dini's theorem implies that the strong law of large numbers for standard random walks with finite mean has a uniform version. For the particular standard random walk $(S_n)_{n\in\mn}$ as in \eqref{eq:1} it reads: for all $a,b\in\mr$, $a<b$,
\begin{equation}\label{132}
\lim_{T\to\infty}\sup_{a\leq u\leq b}|{\bf S}(T,u)-\psi(u)|=0\quad\text{a.s.},
\end{equation}
where, for $T\geq 0$ and $u\in\mr$, ${\bf S}(T,u):=\eee^{-T}S_{\lfloor \eee^{T+u}\rfloor}$ and $\psi(u)=\eee^u$. This in combination with \eqref{relation_main5000} gives
\begin{equation}\label{relation_main500021}
\big(\big({\bf K}^{(j)}(T)\big)_{j\geq 1}, {\bf S}(T)\big)~\Rightarrow~\big((Z_j)_{j\geq 1}, \psi\big)\quad T\to\infty
\end{equation}
in the product $J_1$-topology on $D^{\mn}\times D$, where ${\bf S}(T):=({\bf S}(T,u))_{u\in\mr}$.

It is known (see, for instance, Lemma 2.3 on p.~159 in
\cite{Gut:2009}) that, for fixed $j\in\mn$, the composition
mapping $((x_1,\ldots, x_j), \varphi)\mapsto (x_1\circ
\varphi,\ldots, x_j\circ \varphi)$ is continuous at vectors
$(x_1,\ldots, x_j): \mr^j\to \mr^j$ with continuous coordinates
and nondecreasing continuous $\varphi: \mr\to \mr_+$. Since $Z_j$ is a.s.\ continuous (see Theorem \ref{thm:main2}) and $\psi$ is
nonnegative, nondecreasing and continuous, we can invoke the continuous mapping theorem to infer
\begin{equation*}
\Big(\Big(\frac{\mathcal{K}^{(j)}_{\lfloor \eee^{T+u}\rfloor }- \Phi_j(S_{\lfloor \eee^{T+u}\rfloor})}{(\Psi_j(\eee^T))^{1/2}}\Big)_{u\in\mr}\Big)_{j\geq 1}~\Rightarrow~((Z_j(u))_{u\in\mr})_{j\geq 1},\quad T\to\infty
\end{equation*}
in the product $J_1$-topology on $D^{\mn}$. Here, $\Phi_j(t)=\me K^{(j)}_t$, $\Psi_j(t)={\rm Var}\,K^{(j)}_t$ for $t\geq 0$ and we have used \eqref{eq:impo}.

According to \eqref{eq:414}, the function $t\mapsto \Psi_j(\eee^t)$ is regularly varying at $\infty$, whence $\Psi_j(e^{T})~\sim~\Psi_j(\lfloor e^{T}\rfloor)$ as $T\to\infty$. Furthermore, by Lemma 4 in \cite{Gnedin+Hansen+Pitman:2007}, $\Psi_j(\lfloor \eee^T\rfloor)~\sim~{\rm Var}\,\mathcal{K}^{(j)}_{\lfloor \eee^T\rfloor}$ whenever $\lim_{T\to\infty}\Psi_j(T)=\infty$. Summarizing, ${\rm Var}\,\mathcal{K}^{(j)}_{\lfloor \eee^T\rfloor}~\sim~\Psi_j(\eee^T)$ as $T\to\infty$.

Thus, we are left with showing that, for all $a,b>0$, $a<b$
\begin{equation}\label{eq:cond1}
(\Psi_j(t))^{-1/2}\sup_{v\in [a,\,b]}|\Phi_j(S_{\lfloor tv\rfloor})-\Phi_j(tv)|~\overset{\mmp}{\to}~ 0,\quad t\to\infty
\end{equation}
and
\begin{equation}\label{eq:cond2}
\lim_{t\to\infty} (\Psi_j(t))^{-1/2}\sup_{v\in [a,\,b]}|\Phi_j(tv)-\me \mathcal{K}^{(j)}_{\lfloor tv\rfloor}|= 0.
\end{equation}
Here, for notational simplicity we have replaced $\eee^T$ with $t$ and $\eee^u$ with $v$.

\noindent {\sc Proof of \eqref{eq:cond1}}. Fix $a,b>0$, $a<b$ and put $\eta(at):=at\wedge S_{\lfloor at\rfloor}$ for $t\geq 0$. Write, for $x\geq 0$,
\begin{multline*}
\sup_{v\in[a,\,b]}\,|\Phi_j(S_{\lfloor tv\rfloor})-\Phi_j(tv)|\leq \sup_{v\in[a,\,b]}\,\sum_{|{\rr}|=j}\eee^{-(tv\wedge S_{\lfloor tv\rfloor})p_{\rr}}\big(1-\eee^{-|S_{\lfloor tv\rfloor}-tv|p_{\rr}}\big)\\\leq \sum_{|{\rr}|=j} \eee^{-\eta(at)p_{\rr}}\big(1-\eee^{-\sup_{v\in[a,\,b]}|S_{\lfloor tv\rfloor}-tv|p_{\rr}}\big)=\big(\Phi_j\big(\eta(at)+\sup_{v\in[a,\,b]}|S_{\lfloor tv\rfloor}-tv|\big)-\Phi_j(\eta(at))\big)\\\times (\1_{\{\sup_{v\in[a,\,b]}|S_{\lfloor tv\rfloor}-tv|\leq t^{1/2}x\}}+\1_{\{\sup_{v\in[a,\,b]}|S_{\lfloor tv\rfloor}-tv|> t^{1/2}x\}})=:A_j(t,x)+B_j(t,x)
\end{multline*}
having utilized monotonicity of $y\mapsto \eee^{-yp_{\rr}}$ for the second inequality.

We intend to prove that
\begin{equation}\label{eq:17}
\lim_{t\to\infty}\frac{A_j(t,x)}{(\Psi_j(t))^{1/2}}=0\quad\text{a.s.}
\end{equation}
To this end, we first note that $$A_j(t,x)\leq \Phi_j(\eta(at)+t^{1/2}x)-\Phi_j(\eta(at))\quad \text{a.s.}$$ It is shown in the proof of Lemma 4 in \cite{Gnedin+Hansen+Pitman:2007} that under the sole assumption that $\lim_{t\to\infty}\Psi_j(t)=\infty$, the function $\Phi_j^\prime(t)=\sum_{|{\rr}|=j}p_{\rr}\eee^{-tp_{\rr}}$ satisfies $$\lim_{t\to\infty}\frac{(\Phi_j^\prime(t))^2}{\Phi_j^\prime(2t)}=0.$$ With this at hand, arguing as in the proof of formula (13) in \cite{Gnedin+Hansen+Pitman:2007} we obtain a.s.
\begin{equation}\label{eq:15}
\frac{\Phi_j(\eta(at)+t^{1/2}x)-\Phi_j(\eta(at))}{(\Phi_j(2\eta(at))-\Phi_j(\eta(at)))^{1/2}}=\frac{\int_{\eta(at)}^{\eta(at)+t^{1/2}x}\Phi_j^\prime(y){\rm d}y}{(\int_{\eta(at)}^{2\eta(at)}\Phi_j^\prime(y){\rm d}y)^{1/2}}\leq \frac{t^{1/2}x \Phi_j^\prime(\eta(at))}{(\eta(at)\Phi_j^\prime(2\eta(at)))^{1/2}}\to 0,\quad t\to\infty. 
\end{equation}
Here, we have used the limit relation
\begin{equation}\label{eq:16}
\lim_{t\to\infty}\frac{\eta(at)}{t}=a\quad\text{a.s.}
\end{equation}
which follows from the strong law of large numbers for standard random walks. In view of the equality $\Psi_j(\eta(at))=\Phi_j(2\eta(at))-\Phi_j(\eta(at))$ and the fact that the function $\Psi_j$ is slowly varying at $\infty$ we conclude with the help of \eqref{eq:16} and the uniform convergence theorem for slowly varying functions (Theorem 1.2.1 in \cite{Bingham+Goldie+Teugels:1989}) that $$\lim_{t\to\infty}\frac{\Psi_j(\eta(at))}{\Psi_j(t)}=1\quad \text{a.s.}$$ This in combination with \eqref{eq:15} proves \eqref{eq:17}.

Before we proceed recall one known corollary to Donsker's theorem $$t^{-1/2}\sup_{v\in[a,\,b]}\,|S_{\lfloor tv\rfloor}-tv|  ~{\overset{{\rm
d}}\longrightarrow}~ \sup_{v\in [a,\,b]}\,|B(v)|,\quad t\to\infty,$$ where $(B(v))_{v\geq 0}$ is a standard Brownian motion. Using this and \eqref{eq:17} we write, for any $\varepsilon>0$ and any $x>0$,
\begin{multline*}
\mmp\big\{A_j(t,x)+B_j(t,x)>\varepsilon (\Psi_j(t))^{1/2}\big\}\leq \mmp\{A_j(t)>2^{-1}\varepsilon (\Psi_j(t))^{1/2}\}+\mmp\{B_j(t)>2^{-1}\varepsilon (\Psi_j(t))^{1/2}\}\\\leq o(1)+\mmp\{\sup_{v\in[a,\,b]}|S_{\lfloor tv\rfloor}-tv|> t^{1/2}x\}\to \mmp\{\sup_{v\in[a,\,b]}|B(v)|>x\},\quad t\to\infty.
\end{multline*}
Since the left-hand side does not depend on $x$ we obtain on letting $x\to\infty$ $$\frac{A_j(t,x)+B_j(t,x)}{(\Psi_j(t))^{1/2}}~\overset{\mmp}{\to}~ 0,\quad t\to\infty$$ and thereupon \eqref{eq:cond1}. 

\noindent {\sc Proof of \eqref{eq:cond2}}. It is shown in the proof of Lemma 4.2 in \cite{Durieu+Wang:2016} that, for $v>0$ and large $t$,
$$|\Phi_j(tv)-\me \mathcal{K}^{(j)}_{\lfloor tv\rfloor}|\leq 1+\frac{\Phi_j(\lfloor tv\rfloor)}{\lfloor tv\rfloor}.$$ Consequently, by monotonicity,
$$\sup_{v\in [a,\,b]}|\Phi_j(tv)-\me \mathcal{K}^{(j)}_{\lfloor tv\rfloor}|\leq 1+\frac{\Phi_j(\lfloor tb\rfloor)}{\lfloor ta\rfloor}\leq 1+\frac{\lfloor tb\rfloor}{\lfloor ta\rfloor},$$ and \eqref{eq:cond2} follows.

\vspace{5mm}

\noindent {\bf Acknowledgement}. AI and VK acknowledge support by the National Research Foundation of Ukraine (project 2020.02/0014 ``Asymptotic regimes of perturbed random walks: on the edge of modern and classical probability''). ZK acknowledges support by the German Research Foundation under Germany's Excellence Strategy  EXC 2044 -- 390685587, Mathematics M\"unster: Dynamics - Geometry - Structure.

\end{document}